\documentclass[10pt,a4paper]{article}
\usepackage[utf8]{inputenc}
\usepackage{amssymb}
\usepackage{marvosym} %for \envelope
\usepackage[colorlinks=true,linkcolor=blue,citecolor=blue]{hyperref}
\usepackage{amsmath}
\usepackage{amsfonts}
\usepackage{amsthm}
\usepackage{framed}
\usepackage{enumerate}
\usepackage{rotating}
\usepackage{fancyvrb}
\usepackage{multirow}
\usepackage{float}
\usepackage{fullpage}

\newcommand{\Fix}{\operatorname{Fix}} %fixed points
\newcommand{\wto}{\stackrel{w.}{\rightharpoonup}} %weak convergence
 %range
\newcommand{\qede}{\hspace*{\fill}$\Diamond$\medskip}
\newcommand{\envelope}{(\raisebox{-.5pt}{\scalebox{1.45}{\Letter}}\kern-1.7pt{ })}

\newtheorem{fact}{Fact}[section]
\newtheorem{conj}{Conjecture}[section]
\newtheorem{theorem}{Theorem}[section]
\newtheorem{lemma}{Lemma}[section]
\newtheorem{corollary}{Corollary}[section]
\theoremstyle{remark}
\newtheorem{remark}{Remark}[section]

\begin{document}

\title{A Cyclic Douglas--Rachford Iteration Scheme}
\author{Jonathan M. Borwein \and Matthew K. Tam}

%\institute{J.M. Borwein \and M.K. Tam \envelope \at CARMA Centre, University of Newcastle, Callaghan, NSW 2308, Australia.\\
%          \email{\url{matthew.k.tam@gmail.com}} \\
%          J.M. Borwein \\
%          \email{\url{jon.borwein@gmail.com}}}

\maketitle

\begin{abstract}
 In this paper we present two Douglas--Rachford inspired iteration schemes which can be applied directly to $N$-set convex feasibility problems in Hilbert space. Our main results are weak convergence of the methods to a point whose nearest point projections onto each of the $N$ sets coincide. For affine subspaces, convergence is in norm. Initial results from numerical experiments, comparing our methods to the classical (product-space) Douglas--Rachford scheme, are promising.
\end{abstract}

\section{Introduction} \label{sec:introduction}

Given $N$ closed and convex sets with nonempty intersection, the \emph{$N$-set convex feasibility problem} asks for a point contained in the intersection of the $N$ sets. Many optimization  and reconstruction problems can be cast in this framework, either directly or as a suitable relaxation if a desired bound on the quality of the solution is known \emph{a priori}.

A common approach to solving $N$-set convex feasibility problems is the use of \emph{projection algorithms}. These iterative methods assume that the projections onto each of the individual sets are relatively simple to compute. Some well known projection methods include von Neumann's alternating projection method \cite{vonneumann1950functional,halperin1962product,bregman1965method,bauschke1993convergence,bauschke1997method,kopecka2004note,kopecka2010another,pustylnik2012convergence}, the Douglas--Rachford method \cite{douglas1956numerical,lions1979splitting,bauschke2004finding} and Dykstra's method \cite{dykstra1983algorithm,boyle1986method,bauschke1994dykstra}. Of course, there are many variants. For a review, we refer the reader to any of \cite{bauschke2001projection,bauschke1996projection,deutsch1992method,tam2012method,escalante2011alternating,borwein2012maximum}.

On certain classes of problems, various projection methods coincide with each other, and with other known techniques. For example, if the sets are closed affine subspaces, alternating projections~$=$~Dykstra's method \cite{boyle1986method}. If the sets are hyperplanes, alternating projections~$=$~Dykstra's method~$=$~Kaczmarz's method \cite{deutsch1992method}. If the sets are half-spaces, alternating projections~$=$~the method Agmon, Motzkin and Schoenberg (MAMS), and Dykstra's method~$=$~Hildreth's method \cite[Chapter~4]{escalante2011alternating}. Applied to the phase retrieval problem, alternating projections~$=$~error reduction, Dykstra's method~$=$~Fienup's BIO, and Douglas--Rachford~$=$~Fienup's HIO \cite{bauschke2002phase}.

Continued interest in the Douglas--Rachford iteration is in part due to its excellent---if still largely mysterious---performance on various problems involving one or more \emph{non-convex} sets. For example, in phase retrieval problems arising in the context of image reconstruction \cite{bauschke2002phase,bauschke2003hybrid}. The method has also been successfully applied to NP-complete combinatorial problems including Boolean satisfiability \cite{elser2007searching,gravel2008divide} and Sudoku \cite{elser2007searching,schaad2010modeling}. In contrast, von Neumann's alternating projection method applied to such problems often fails to converge satisfactorily. For progress on the behaviour of non-convex alternating projections, we refer the reader to \cite{lewis2009local,bauschke2012restricted,hesse2012nonconvex,drusvyatskiy2013alternating}.

Recently, Borwein and Sims \cite{borwein2011douglas} provided limited theoretical justification for non-convex Douglas--Rachford iterations, proving local convergence for a prototypical Euclidean case involving a sphere and an affine subspace. For the two-dimensional case of a circle and  a line, Borwein and Arag{\'o}n \cite{aragon2012global} were able to give an explicit region of convergence. Even more recently, a local version of firm nonexpansivity has been utilized by Hesse and Luke \cite{hesse2012nonconvex} to obtain local convergence of the Douglas--Rachford method in limited non-convex settings. Their results do not directly overlap with the work of Arag{\'o}n, Borwein and Sims (for details see \cite[Example~43]{hesse2012nonconvex}).

Most projection algorithms can be extended  in various natural ways to the $N$-set convex feasibility problem without significant modification. An exception is the Douglas--Rachford method, for which only the theory of $2$-set feasibility problems has so far been successfully investigated. For applications involving $N>2$ sets, an equivalent $2$-set feasibility problem can, however, be posed in a product space. We shall revisit this later in our paper.\\

\emph{The aim of this paper is to introduce and study the cyclic Douglas--Rachford and averaged Douglas--Rachford iteration schemes. Both can be applied directly to the $N$-set convex feasibility problem without recourse to a product space formulation. }\\

The paper is organized as follows: In Section~\ref{sec:preliminaries}, we give definitions and preliminaries. In Section~\ref{sec:cycDR}, we introduce the cyclic and averaged Douglas--Rachford iteration schemes, proving in each case weak convergence to a point whose projections onto each of the constraint sets coincide. In Section~\ref{sec:norm}, we consider the important special case when the constraint sets are affine. In Section~\ref{sec:experiments}, the new cyclic Douglas--Rachford scheme is compared, numerically, to the classical (product-space) Douglas--Rachford scheme on feasibility problems having ball or sphere constraints. Initial numerical results for the cyclic Douglas--Rachford scheme are quite positive.

\section{Preliminaries} \label{sec:preliminaries}
Throughout this paper,
\begin{framed}
  $$\mathcal H\text{ is a real Hilbert space with inner product }\langle\cdot,\cdot\rangle$$
\end{framed}
\noindent and induced norm $\|\cdot\|$. We use $\wto$ to denote weak convergence. \vspace{1em}

We consider the $N$-set convex feasibility problem:
\begin{framed}
 \begin{equation}
  \text{Find~}~x\in \bigcap_{i=1}^N C_i\neq\emptyset~\text{ where }C_i\subseteq\mathcal H\text{ are closed and convex}. \label{eq:cfp}
 \end{equation}
\end{framed}

Given a set $S\subseteq\mathcal H$ and point $x\in\mathcal H$, the \emph{best approximation} to $x$ from $S$ is a point $p\in S$ such that
 $$\|p-x\|=d(x,S):=\inf_{s\in S}\|x-s\|.$$
If for every $x\in\mathcal H$ there exists such a $p$, then $S$ is said to be \emph{proximal}. Additionally, if $p$ is always unique then $S$ is said to be \emph{Chebyshev}. In the latter case, the \emph{projection} onto $S$ is the operator $P_S:\mathcal H\to S$ which maps $x$ to its unique nearest point in $S$ and we write $P_S(x)=p$. The \emph{reflection} about $S$ is the operator $R_S:\mathcal H\to\mathcal H$ defined by $R_S:=2P_S-I$ where $I$ denotes the \emph{identity} operator which maps any $x\in\mathcal H$ to itself.

\begin{fact} \label{fact:projections}
 Let $C\subseteq\mathcal H$ be non-empty closed and convex. Then:
  \begin{enumerate}[(i)]
   \item $C$ is Chebyshev.
   \item (Characterization of projections)
  $$P_C(x)=p \iff \langle x-p,c-p\rangle\leq 0 \text{ for all }c\in C.$$
   \item (Characterization of reflections)
  $$R_C(x)=r \iff \langle x-r,c-r\rangle\leq\frac{1}{2}\|x-r\|^2 \text{ for all }c\in C.$$
   \item (Translation formula) For $y\in\mathcal H$, $P_{y+C}(x)=y+P_C(x-y)$.
   \item (Dilation formula) For $0\neq\lambda\in\mathbb R$, $P_{\lambda C}(x)=\lambda P_C(x/\lambda)$.
   \item If $C$ is a subspace then $P_C$ is linear.
   \item If $C$ is an affine subspace then $P_C$ is affine.
  \end{enumerate}
\end{fact}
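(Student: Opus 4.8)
The plan is to build everything on top of part~(ii), the variational characterization, which is the engine driving the remaining items; the only genuinely analytic input is the existence half of~(i), for which I would invoke completeness of $\mathcal H$.

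For~(i) I would first prove existence. Let $d := d(x,C)$ and choose a minimizing sequence $(c_n)$ in $C$ with $\|x - c_n\| \to d$. Applying the parallelogram law to $x - c_n$ and $x - c_m$ and using that $\tfrac12(c_n + c_m) \in C$ by convexity (so $\|x - \tfrac12(c_n+c_m)\| \ge d$), I would obtain
\[ \|c_n - c_m\|^2 \le 2\|x-c_n\|^2 + 2\|x-c_m\|^2 - 4d^2 \longrightarrow 0, \]
so $(c_n)$ is Cauchy; completeness of $\mathcal H$ together with closedness of $C$ then supplies a limit $p \in C$ with $\|x - p\| = d$. Uniqueness is the same computation: if $p, p'$ are both nearest, the displayed bound with $c_n = p$, $c_m = p'$ gives $\|p - p'\|^2 \le 0$. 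For~(ii), assuming $p = P_C(x)$ I would perturb along the segment $p + t(c - p) \in C$ for $t \in (0,1]$, expand $\|x - p - t(c-p)\|^2$, and let $t \downarrow 0$ to extract $\langle x - p, c - p\rangle \le 0$; conversely, the identity $\|x - c\|^2 = \|x - p\|^2 + 2\langle x - p, p - c\rangle + \|p - c\|^2$ shows that this inequality forces $\|x - c\| \ge \|x - p\|$ for every $c \in C$.

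Items~(iii)--(v) are then purely algebraic consequences. For~(iii) I would set $r = R_C(x) = 2p - x$, so that $p = \tfrac12(x + r)$, substitute $x - p = \tfrac12(x-r)$ and $c - p = (c - r) - \tfrac12(x - r)$ into the inequality of~(ii), and read off
\[ \langle x - p, c - p\rangle = \tfrac12\langle x - r, c - r\rangle - \tfrac14\|x - r\|^2 \le 0, \]
which rearranges at once to the claimed bound. For~(iv) and~(v) I would note that $\|x - (y + c)\| = \|(x - y) - c\|$ and $\|x - \lambda c\| = |\lambda|\,\|x/\lambda - c\|$, so that minimizing over $c \in C$ transfers the problem back to $C$ and identifies the nearest points as $y + P_C(x - y)$ and $\lambda P_C(x/\lambda)$ respectively, with uniqueness from~(i) guaranteeing these are indeed \emph{the} projections.

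Finally, for~(vi) I would specialize~(ii): when $C$ is a subspace, $c - p$ ranges over all of $C$ as $c$ does, so $\langle x - P_C x, w\rangle \le 0$ for every $w \in C$; replacing $w$ by $-w$ upgrades this to orthogonality, $x - P_C x \perp C$. Linearity then follows by verifying that $\alpha P_C x_1 + \beta P_C x_2 \in C$ and that $(\alpha x_1 + \beta x_2) - (\alpha P_C x_1 + \beta P_C x_2)$ is orthogonal to $C$, whence uniqueness forces $P_C(\alpha x_1 + \beta x_2) = \alpha P_C x_1 + \beta P_C x_2$. For~(vii) I would write the affine subspace as $C = y + V$ with $V$ a closed subspace, apply~(iv) to get $P_C(x) = y + P_V(x - y)$, and invoke the linearity from~(vi) to rewrite this as $P_C(x) = P_V(x) + (y - P_V y)$, a linear map plus a constant, which is affine. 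The main obstacle is really only the existence assertion in~(i): it is the single place where completeness of the space is indispensable, whereas everything downstream is either the one-variable minimization behind~(ii) or bookkeeping with that inequality.
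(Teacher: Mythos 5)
Your proof is correct and is the standard argument; the paper itself does not prove this fact but simply cites textbook references, with the one substantive remark that (iii) follows from (ii) by the substitution $r=2p-x$ --- precisely the computation you carry out. In effect you have supplied, correctly, the proof that the paper delegates to \cite{bauschke2011convex} and the other references.
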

\begin{proof}
 See, for example, \cite[Theorem~3.14, Proposition~3.17, Corollary~3.20]{bauschke2011convex}, \cite[Theorem~2.8, Exercise~5.2(i), Theorem~3.1, Exercise~5.10]{escalante2011alternating} or \cite[Theorem~2.1.3, Theorem~2.1.6]{tam2012method}. Note, the equivalence of (ii) and (iii) by substituting $r=2p-x$. 
\end{proof}

Given $A,B\subseteq\mathcal H$  we define the \emph{$2$-set Douglas--Rachford} operator $T_{A,B}:\mathcal H\to\mathcal H$ by
\begin{equation}\label{tab}
 T_{A,B}:=\frac{I+R_BR_A}{2}.
\end{equation}
Note that $ T_{A,B}$ and  $T_{B,A}$ are typically distinct, while for an affine set $A$ we have $T_{A,A}=I$.
\vspace{1em}

The basic Douglas--Rachford algorithm originates in \cite{douglas1956numerical} and convergence  was proven as part  of \cite{lions1979splitting}.

\begin{theorem}[Douglas--Rachford \cite{douglas1956numerical}, Lions--Mercier \cite{lions1979splitting}]\label{thm:dr}
 Let $A,B\subseteq\mathcal H$ be closed and convex with nonempty intersection. For any $x_0\in\mathcal H$, the sequence $T_{A,B}^nx_0$ converges weakly to a point $x$ such that ${P_Ax\in A\cap B}$.
\end{theorem}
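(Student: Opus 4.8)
The plan is to prove convergence of the Douglas--Rachford iteration by exhibiting $T_{A,B}$ as a firmly nonexpansive operator and then applying the standard fixed-point iteration theory for such operators in Hilbert space.

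**Key structural observations.** First I would establish that the reflections $R_A$ and $R_B$ are nonexpansive. This follows from firm nonexpansivity of the projections $P_A$, $P_B$ (a consequence of Fact~\ref{fact:projections}(ii), via the characterization inequality), since $R = 2P - I$ is nonexpansive precisely when $P$ is firmly nonexpansive. Consequently the composition $R_B R_A$ is nonexpansive, and therefore $T_{A,B} = (I + R_B R_A)/2$ is an average of the identity and a nonexpansive map, hence firmly nonexpansive and in particular averaged.

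**The convergence mechanism.** For firmly nonexpansive (more generally, averaged) operators with a nonempty fixed-point set, the Krasnoselskii--Mann / Opial machinery gives weak convergence of the iterates $T_{A,B}^n x_0$ to a fixed point. The plan is therefore to invoke this: I would verify that $\Fix T_{A,B}$ is nonempty, which is where the hypothesis $A\cap B\neq\emptyset$ enters. Indeed, if $x\in A\cap B$ then $R_A x = x$ and $R_B x = x$, so $T_{A,B}x = x$; thus $A\cap B\subseteq \Fix T_{A,B}$ and the fixed-point set is nonempty. Weak convergence of $T_{A,B}^n x_0$ to some fixed point $x$ then follows from the general theorem on averaged maps (using the demiclosedness of $I - T_{A,B}$ together with Opial's lemma, or by citing the standard result directly).

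**The delicate final step.** The genuinely nontrivial part is identifying the limit and establishing the stated conclusion $P_A x \in A\cap B$ — not merely that $x$ is some fixed point. I would first characterize the fixed points: $x\in\Fix T_{A,B}$ iff $R_B R_A x = x$, and I expect to show this forces $P_A x = P_B R_A x$ with $P_A x \in A\cap B$. The main obstacle is that $x$ itself need not lie in $A\cap B$; rather it is the \emph{shadow} $P_A x$ that solves the feasibility problem. Establishing that $P_A x$ is genuinely in both sets requires unwinding the fixed-point equation: from $R_B R_A x = x$ one deduces that $R_A x$ is a fixed point of $R_B$ reflected appropriately, forcing $P_A x = P_B(R_A x)$, and a short computation shows this common point lies in $A\cap B$. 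Care is needed to pass the weak convergence of the full iterates through to the (generally only demiclosed, not weakly continuous) projection operator $P_A$, so I would argue that $P_A$ evaluated along the orbit converges weakly to $P_A x$ by invoking firm nonexpansivity of $P_A$ and the fact that $\|x_{n+1}-x_n\|\to 0$ for averaged iterations.
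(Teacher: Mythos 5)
Your proposal is correct and follows exactly the route the paper itself indicates: the paper does not prove Theorem~\ref{thm:dr} directly (it attributes it to Douglas--Rachford and Lions--Mercier), but it notes that one proof is via Opial's Theorem~\ref{thm:opial}, using firm nonexpansivity of $T_{A,B}$ (Fact~\ref{fact:nonexpansive}), asymptotic regularity (Lemma~\ref{lem:firmlynonexpansivewithfp}), the inclusion $A\cap B\subseteq\Fix T_{A,B}$, and the fixed-point characterization $P_A\Fix T_{A,B}=A\cap B$ (Lemma~\ref{lem:drfp}) --- precisely the ingredients you assemble. One minor remark: your closing concern about passing weak convergence through $P_A$ is unnecessary for the stated conclusion, since $P_Ax\in A\cap B$ follows purely algebraically from the fixed-point equation $R_BR_Ax=x$ satisfied by the weak limit $x$, with no interchange of limits and $P_A$ required.
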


Theorem~\ref{thm:dr} gives an iterative algorithm for solving $2$-set convex feasibility problems. For applications involving $N>2$ sets, an equivalent $2$-set formulation is posed in the product space $\mathcal H^N$. This is discussed in detail in Remark~\ref{remark:productspace}.

Let $T:\mathcal H\to\mathcal H$. We recall that $T$ is \emph{asymptotically regular} if $T^{n}x-T^{n+1}x\to 0$, in norm, for all $x\in\mathcal H$.  We denote the set of \emph{fixed points} of $T$ by $\Fix T=\{x:Tx=x\}$. Let $D\subseteq\mathcal H$ and $T:D\to\mathcal H$. We say $T$ is \emph{nonexpansive} if
   $$\|Tx-Ty\|\leq\|x-y\|\text{ for all }x,y\in D$$
(i.e. $1$-Lipschitz). We say $T$ is \emph{firmly nonexpansive} if
   $$\|Tx-Ty\|^2+\|(I-T)x-(I-T)y\|^2\leq\|x-y\|^2\text{ for all }x,y\in D.$$
It immediately follows that every firmly nonexpansive mapping is nonexpansive.

\begin{fact} \label{fact:nonexpansive}
 Let $A,B\subseteq\mathcal H$ be closed and convex. Then $P_A$ is firmly nonexpansive, $R_A$ is nonexpansive and $T_{A,B}$ is firmly nonexpansive.
\end{fact}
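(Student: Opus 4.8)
The plan is to prove the three claims of Fact~\ref{fact:nonexpansive} in sequence, exploiting the fact that each statement reduces to the previous one together with the characterization of projections in Fact~\ref{fact:projections}(ii). I would begin with the firm nonexpansivity of $P_A$, since this is the foundational estimate from which the others follow. First I would fix $x,y\in\mathcal H$ and write $p=P_A(x)$, $q=P_A(y)$. Applying the characterization in Fact~\ref{fact:projections}(ii) with the point $q\in A$ in the inequality for $p$, and symmetrically with $p\in A$ in the inequality for $q$, gives two inner-product inequalities; adding them yields
\begin{equation}
 \langle x-y-(p-q),\,p-q\rangle\geq 0, \notag
\end{equation}
that is, $\langle x-y,\,p-q\rangle\geq\|p-q\|^2$. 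The standard route from here is to expand $\|x-y\|^2=\|(p-q)+((x-p)-(y-q))\|^2$ and use this monotonicity inequality to absorb the cross term; recognizing that $(I-P_A)x-(I-P_A)y=(x-y)-(p-q)$, a short computation rearranges the expansion into exactly the firm nonexpansivity inequality with $T=P_A$.

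For the reflection $R_A=2P_A-I$, the cleanest approach is to observe that nonexpansivity of $R_A$ is algebraically equivalent to firm nonexpansivity of $P_A$. Indeed, a general operator $T$ is firmly nonexpansive if and only if $2T-I$ is nonexpansive; I would verify this by expanding $\|(2P_A-I)x-(2P_A-I)y\|^2$ in terms of $\|p-q\|^2$, $\|x-y\|^2$ and the cross term $\langle p-q,\,x-y\rangle$, and checking that the resulting inequality $\|R_Ax-R_Ay\|^2\leq\|x-y\|^2$ is precisely the monotonicity estimate already established for $P_A$. Thus nonexpansivity of $R_A$ follows with no further work, and likewise for $R_B$.

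The firm nonexpansivity of $T_{A,B}=(I+R_BR_A)/2$ then follows from the same equivalence applied in reverse. Since $R_A$ and $R_B$ are each nonexpansive, their composition $R_BR_A$ is nonexpansive (composition of $1$-Lipschitz maps is $1$-Lipschitz). Writing $T_{A,B}=\tfrac12 I+\tfrac12 R_BR_A$, I note that $2T_{A,B}-I=R_BR_A$ is nonexpansive, so by the equivalence ``$2T-I$ nonexpansive $\iff$ $T$ firmly nonexpansive'' we conclude that $T_{A,B}$ is firmly nonexpansive. I expect the main point requiring care to be the algebraic equivalence between firm nonexpansivity of $T$ and nonexpansivity of $2T-I$: once that identity is established once and for all, all three assertions reduce to it, so the bulk of the work is the bookkeeping in the single expansion of $\|(2T-I)x-(2T-I)y\|^2$ and its comparison with the defining inequalities. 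None of this is difficult, but getting the cross-terms to cancel correctly is where a careless sign would propagate.
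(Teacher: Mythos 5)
Your argument is correct: the monotonicity inequality $\langle x-y,P_Ax-P_Ay\rangle\geq\|P_Ax-P_Ay\|^2$ derived from Fact~\ref{fact:projections}(ii), the equivalence ``$T$ firmly nonexpansive $\iff$ $2T-I$ nonexpansive'', and the closure of nonexpansive maps under composition together give all three claims. The paper itself offers no proof, only citations, and your argument is precisely the standard one carried out in the cited references, so there is nothing to correct or compare.
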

\begin{proof}
 See, for example, \cite[Proposition~4.8, Corollary~4.10, Remark~4.24]{bauschke2011convex}, or \cite[Theorem~2.2.4, Corollary~4.3.6]{tam2012method}. 
\end{proof}

The class of nonexpansive mappings is closed under convex combinations, compositions, etc. The class of firmly nonexpansive mappings is, however,  not so well behaved. For example, even  the composition of two projections onto subspaces need not be firmly nonexpansive (see \cite[Example~4.2.5]{bauschke1997method}).

A sufficient condition for firmly nonexpansive operators to be asymptotically regular is the following.

\begin{lemma} \label{lem:firmlynonexpansivewithfp}
 Let $T:\mathcal H\to\mathcal H$ be firmly nonexpansive with $\Fix T\neq\emptyset$. Then $T$ is asymptotically regular.
\end{lemma}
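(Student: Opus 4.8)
The plan is to exploit the defining inequality of firm nonexpansivity with its second argument held fixed at a point of $\Fix T$, and then to telescope. Fix any $x\in\mathcal H$, write $x_n:=T^n x$, and choose $z\in\Fix T$, which is possible by hypothesis. Applying the firm nonexpansivity inequality to the pair $x_n$ and $z$, and using $Tz=z$ (so that $(I-T)z=0$) together with $Tx_n=x_{n+1}$, I obtain
$$\|x_{n+1}-z\|^2+\|x_n-x_{n+1}\|^2\leq\|x_n-z\|^2.$$

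From this single estimate I would read off two consequences. Dropping the middle term shows that $\bigl(\|x_n-z\|\bigr)_n$ is nonincreasing; being bounded below by $0$, it converges, and in particular the orbit $(x_n)$ is bounded. Rearranging the same inequality instead gives
$$\|x_n-x_{n+1}\|^2\leq\|x_n-z\|^2-\|x_{n+1}-z\|^2.$$

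Next I would sum this over $n=0,1,\dots,N$. The right-hand side telescopes to $\|x_0-z\|^2-\|x_{N+1}-z\|^2\leq\|x_0-z\|^2$, so the partial sums $\sum_{n=0}^N\|x_n-x_{n+1}\|^2$ are bounded above independently of $N$. Hence the series $\sum_{n=0}^\infty\|x_n-x_{n+1}\|^2$ converges, which forces its terms to tend to $0$; that is, $\|T^n x-T^{n+1}x\|\to 0$. Since $x\in\mathcal H$ was arbitrary, $T$ is asymptotically regular.

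There is no serious obstacle here: the whole argument reduces to a two-line manipulation once one decides to test firm nonexpansivity against a fixed point, which is precisely what annihilates the $(I-T)z$ term and converts the inequality into a telescoping estimate. The only point requiring a moment's care is the passage from square-summability of the consecutive differences to their convergence to zero, but this is immediate from the necessary condition for convergence of a series. Note that the hypothesis $\Fix T\neq\emptyset$ is used only to supply the anchor $z$, while the Fej\'er monotonicity of $\bigl(\|x_n-z\|\bigr)_n$ emerges as a by-product that is likely to be useful later for establishing convergence of the orbit itself.
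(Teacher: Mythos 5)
Your proof is correct and complete. Note that the paper does not actually supply an argument for this lemma; it simply cites Reich and the second author's thesis. What you have written is precisely the standard argument that underlies those references: testing the firm nonexpansivity inequality against a fixed point $z$ kills the $(I-T)z$ term, yields the Fej\'er-type estimate $\|x_{n+1}-z\|^2+\|x_n-x_{n+1}\|^2\leq\|x_n-z\|^2$, and telescoping gives square-summability of the increments, hence $\|T^nx-T^{n+1}x\|\to 0$. The cited sources typically establish asymptotic regularity in the greater generality of averaged nonexpansive mappings (where one must first rewrite $T=\tfrac12 I+\tfrac12 R$ with $R$ nonexpansive and work a little harder); your direct route buys a two-line, self-contained proof at the cost of that extra generality, which is all the paper needs here. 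One tiny stylistic point: the boundedness of the orbit, which you derive as a by-product, is not needed for the conclusion of this lemma, though as you observe it is exactly what Opial's theorem later consumes.
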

\begin{proof}
 See, for example, \cite[Corollary~1]{reich1987asymptotic} or \cite[Lemma~4.3.5]{tam2012method}.  
\end{proof}

The composition of firmly nonexpansive operators is always nonexpansive. However, nonexpansive operators need not be asymptotically regular. For example, reflection with respect to a singleton, clearly is not; nor are most rotations. The following is a sufficient condition for asymptotic regularity.

\begin{lemma}\label{lem:asymregular}
 Let $T_i:\mathcal H\to\mathcal H$ be firmly nonexpansive, for each $i$, and define $T:=T_r\dots T_2\,T_1$. If $\Fix T\neq\emptyset$ then $T$ is asymptotically regular.
\end{lemma}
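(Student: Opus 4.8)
The plan is to trace the iterates step-by-step through the individual firmly nonexpansive factors and to extract the extra information encoded in firm (as opposed to mere) nonexpansivity. Fix $x\in\mathcal H$, write $x_n:=T^nx$, and choose $f\in\Fix T$. For each $n$ introduce the intermediate points $y_0^{(n)}:=x_n$ and $y_i^{(n)}:=T_iy_{i-1}^{(n)}$ for $i=1,\dots,r$, so that $y_r^{(n)}=Tx_n=x_{n+1}$. Applying the same partial compositions to the fixed point, set $f_0:=f$ and $f_i:=T_if_{i-1}$; since $Tf=f$ we have $f_r=f=f_0$, even though the intermediate images $f_1,\dots,f_{r-1}$ need not be fixed points of anything. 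As $T$ is a composition of firmly nonexpansive, hence nonexpansive, operators, it is nonexpansive, so $\|x_n-f\|$ is nonincreasing and therefore convergent.

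The heart of the argument is a single telescoping estimate. Applying the definition of firm nonexpansivity to $T_i$ at the pair $(y_{i-1}^{(n)},f_{i-1})$ yields
$$\|y_i^{(n)}-f_i\|^2+\big\|(y_{i-1}^{(n)}-y_i^{(n)})-(f_{i-1}-f_i)\big\|^2\le\|y_{i-1}^{(n)}-f_{i-1}\|^2.$$
Summing over $i=1,\dots,r$ and cancelling the telescoping distance terms leaves
$$\|x_{n+1}-f\|^2+\sum_{i=1}^r\big\|(y_{i-1}^{(n)}-y_i^{(n)})-(f_{i-1}-f_i)\big\|^2\le\|x_n-f\|^2.$$
Since $\|x_n-f\|^2$ converges, the right-hand side minus the first term on the left tends to $0$, forcing the finite sum of nonnegative terms, and hence each summand, to vanish; thus $y_{i-1}^{(n)}-y_i^{(n)}\to f_{i-1}-f_i$ in norm for every $i$ as $n\to\infty$.

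Adding these $r$ limits telescopes on the right, giving $x_n-x_{n+1}=\sum_{i=1}^r\big(y_{i-1}^{(n)}-y_i^{(n)}\big)\to\sum_{i=1}^r(f_{i-1}-f_i)=f_0-f_r=0$, which is precisely asymptotic regularity. The only point requiring care—and the reason one uses firm nonexpansivity of the \emph{factors} rather than of $T$ itself—is the bookkeeping of the intermediate images $f_i$: they are not fixed points, but their consecutive differences telescope to $f-f=0$, and this is exactly what makes the per-step displacements cancel in the limit.
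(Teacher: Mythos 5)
Your argument is correct and complete. The telescoping use of firm nonexpansivity along the intermediate points $y_i^{(n)}$ and $f_i$ is sound: the cross terms cancel exactly as you claim, the Fej\'er monotonicity of $\|x_n-f\|$ gives convergence of the squared distances, and the resulting vanishing of each summand forces $x_n-x_{n+1}\to f_0-f_r=0$. The one delicate point---that the intermediate images $f_1,\dots,f_{r-1}$ of the fixed point need not themselves be fixed by anything, but that only their telescoped differences matter---is handled correctly.

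For comparison: the paper does not prove this lemma at all; it simply cites Theorem~5.22 of Bauschke--Combettes. The standard route there is structural: each firmly nonexpansive map is $\tfrac12$-averaged, compositions of averaged operators are averaged, and an averaged operator with a fixed point is asymptotically regular by the Krasnosel'skii--Mann machinery. Your proof is a direct, self-contained alternative that avoids introducing averagedness and instead extracts asymptotic regularity from a single summed firm-nonexpansivity inequality. It buys elementarity and transparency (one sees exactly which quantity is being dissipated at each step); the cited approach buys generality and reusability, since the averagedness calculus applies to relaxations and convex combinations as well, which the paper exploits elsewhere. Either way, your argument would serve as a legitimate replacement for the citation.
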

\begin{proof}
 See, for example, \cite[Theorem~5.22]{bauschke2011convex}.  
\end{proof}

\begin{remark}
  Recently Bauschke, Mart\'in-M\'arquez, Moffat and Wang \cite[Theorem~4.6]{bauschke2012compositions} showed that any composition of firmly nonexpansive, asymptotically regular operators is also asymptotically regular,  even when $\Fix T=\emptyset$.  \qede
\end{remark}

The follow lemma characterizes fixed points of certain compositions of firmly nonexpansive operators.

\begin{lemma} \label{lem:fixedpoints}
 Let $T_i:\mathcal H\to\mathcal H$ be firmly nonexpansive, for each $i$, and define $T:=T_r\dots T_2\,T_1$. If $\bigcap_{i=1}^r\Fix T_i\neq\emptyset$ then $\Fix T=\bigcap_{i=1}^r\Fix T_i$.
\end{lemma}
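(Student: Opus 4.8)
The plan is to prove the two inclusions separately, with the forward inclusion being routine and the reverse inclusion carrying all the content. First I would dispatch $\bigcap_{i=1}^r \Fix T_i \subseteq \Fix T$: if $x$ is fixed by every $T_i$, then applying $T_1, T_2, \dots, T_r$ in succession leaves $x$ unchanged, whence $Tx = x$.

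For the reverse inclusion $\Fix T \subseteq \bigcap_{i=1}^r \Fix T_i$, the idea is to exploit the hypothesis $\bigcap_{i=1}^r \Fix T_i \neq \emptyset$ by fixing a common fixed point $y$ and tracking the orbit of a point $x \in \Fix T$ through the individual operators. Concretely, I would set $x_0 := x$ and $x_i := T_i x_{i-1}$ for $i = 1, \dots, r$, so that $x_r = Tx = x = x_0$. Applying the defining inequality of firm nonexpansivity to $T_i$ at the pair $(x_{i-1}, y)$ and using $T_i y = y$ (so that $(I-T_i)y = 0$), one obtains
$$\|x_i - y\|^2 + \|x_{i-1} - x_i\|^2 \le \|x_{i-1} - y\|^2,$$
or equivalently $\|x_{i-1} - y\|^2 - \|x_i - y\|^2 \ge \|x_{i-1} - x_i\|^2 \ge 0$.

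The key observation is that this shows the finite sequence $(\|x_i - y\|)_{i=0}^r$ is nonincreasing; since the orbit closes up, $x_r = x_0$, so its first and last terms coincide, forcing every term to be equal. Consequently each difference $\|x_{i-1} - y\|^2 - \|x_i - y\|^2$ vanishes, and the displayed inequality then forces $\|x_{i-1} - x_i\| = 0$, i.e. $x_{i-1} = x_i$, for every $i$. Thus $x = x_0 = x_1 = \dots = x_r$, and in particular $T_i x = T_i x_{i-1} = x_i = x$ for each $i$, establishing $x \in \bigcap_{i=1}^r \Fix T_i$.

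I expect the crucial ingredient one must not overlook to be the essential use of the common fixed point $y$: firm nonexpansivity only yields the telescoping distance estimate when distances are measured against a point fixed by all of the $T_i$, and it is precisely the nonemptiness hypothesis that supplies such a $y$. Without it the reverse inclusion can fail, so the argument must be organized around $y$ rather than around $x$ alone. The remaining steps—telescoping a nonincreasing finite sequence that returns to its initial value—are elementary once the estimate is in place.
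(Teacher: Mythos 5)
Your proof is correct and complete: the telescoping of the firm-nonexpansivity inequality against a common fixed point $y$, combined with the fact that the orbit closes up ($x_r = x_0$), is exactly the right mechanism, and you correctly identify that the nonemptiness hypothesis is what makes the reverse inclusion work. The paper does not supply its own argument here---it simply cites \cite[Corollary~4.37]{bauschke2011convex}---and your argument is essentially the standard proof underlying that reference, so nothing further is needed.
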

\begin{proof}
  See, for example, \cite[Corollary~4.37]{bauschke2011convex}.
\end{proof}

There are many way to prove Theorem~\ref{thm:dr}. One is to use the following well-known theorem of Opial \cite{opial1967weak}. % with Lemma~\ref{lem:firmlynonexpansivewithfp}.

\begin{theorem}[Opial] \label{thm:opial}
 Let $T:\mathcal H\to\mathcal H$ be nonexpansive, asymptotically regular, and $\Fix T\neq\emptyset$. Then for any $x_0\in\mathcal H$, $T^nx_0$  converges weakly to an element of $\Fix T$.
\end{theorem}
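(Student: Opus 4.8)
The plan is to combine a Fejér-monotonicity argument with the weak sequential compactness of bounded sets in $\mathcal H$ and the demiclosedness of $I-T$ at $0$, with the essential inner-product input supplied by Opial's inequality. First I would record the elementary \emph{Opial inequality}: if $x_n\wto x$, then for every $y\neq x$,
\[
  \liminf_n\|x_n-x\| < \liminf_n\|x_n-y\|.
\]
This is immediate from expanding $\|x_n-y\|^2=\|x_n-x\|^2+2\langle x_n-x,\,x-y\rangle+\|x-y\|^2$ and noting that the cross term tends to $0$ while $\|x-y\|^2>0$. This single estimate drives both the fixed-point and the uniqueness parts of the argument.

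Next I would establish Fejér monotonicity. Fixing any $p\in\Fix T$, nonexpansiveness and $Tp=p$ give
\[
  \|T^{n+1}x_0-p\| = \|T(T^n x_0)-Tp\| \le \|T^n x_0-p\|,
\]
so $(\|T^n x_0-p\|)_n$ is nonincreasing, hence convergent, and in particular $(T^n x_0)_n$ is bounded. Since $\mathcal H$ is reflexive, the sequence has weak cluster points. If $T^{n_k}x_0\wto x$, then asymptotic regularity gives $T^{n_k}x_0-T(T^{n_k}x_0)=T^{n_k}x_0-T^{n_k+1}x_0\to 0$ in norm; combining this with nonexpansiveness, $\|T^{n_k}x_0-Tx\|\le\|T^{n_k}x_0-T^{n_k+1}x_0\|+\|T^{n_k+1}x_0-Tx\|\le\|T^{n_k}x_0-T^{n_k+1}x_0\|+\|T^{n_k}x_0-x\|$, whence $\liminf_k\|T^{n_k}x_0-Tx\|\le\liminf_k\|T^{n_k}x_0-x\|$. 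Were $Tx\neq x$, the Opial inequality (applied along $(n_k)$, which converges weakly to $x$) would force the strict reverse inequality, a contradiction; hence $x\in\Fix T$. This is precisely the demiclosedness principle, obtained here as a corollary of the Opial inequality.

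Finally I would prove uniqueness of the weak cluster point. Suppose subsequences give $T^{n_k}x_0\wto x$ and $T^{m_j}x_0\wto y$ with $x,y\in\Fix T$. By the Fejér step, both $\lim_n\|T^n x_0-x\|$ and $\lim_n\|T^n x_0-y\|$ exist as full limits, since $x,y$ are fixed points. If $x\neq y$, applying the Opial inequality along $(n_k)$ and then along $(m_j)$ yields
\[
  \lim_n\|T^n x_0-x\| < \lim_n\|T^n x_0-y\| < \lim_n\|T^n x_0-x\|,
\]
a contradiction. Hence all weak cluster points coincide, so $(T^n x_0)_n$ converges weakly to a single point of $\Fix T$.

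The main obstacle is the demiclosedness step: it is the only place where one must convert the purely weak information $T^{n_k}x_0\wto x$, together with the strong information $T^n x_0-T^{n+1}x_0\to 0$, into the algebraic identity $Tx=x$. Everything hinges on the Opial inequality, which is where the Hilbert (inner-product) structure is genuinely used; without it one obtains weak convergence only along subsequences, with no mechanism to pin down a unique limit.
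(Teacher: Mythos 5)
Your proof is correct. The paper does not actually prove this theorem---it cites Opial's original article and Bauschke--Combettes---and your argument is precisely the classical one found in those sources: Fej\'er monotonicity gives boundedness and the existence of $\lim_n\|T^nx_0-p\|$ for each $p\in\Fix T$, the Opial inequality combined with asymptotic regularity yields the demiclosedness of $I-T$ at $0$ (so every weak cluster point is a fixed point), and a second application of the Opial inequality forces all weak cluster points to coincide.
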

\begin{proof}
 See also, for example, \cite{opial1967weak} or \cite[Theorem~5.13]{bauschke2011convex}.  
\end{proof}

In addition, when $T$ is linear, the limit can be identified and  convergence is in norm.

\begin{theorem} \label{thm:affine}
 Let $T:\mathcal H\to\mathcal H$ be linear, nonexpansive and asymptotically regular. Then for any $x_0\in\mathcal H$, in norm,
   $$\lim_{n\to\infty}T^nx_0 = P_{\Fix T}x_0.$$
\end{theorem}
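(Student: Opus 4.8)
The plan is to exploit linearity to reduce everything to showing that $T^n$ annihilates the orthogonal complement of $\Fix T$ in the limit. First I would observe that, since $T$ is linear and continuous, $\Fix T=\ker(I-T)$ is a closed linear subspace; in particular $0\in\Fix T$, so $\Fix T\neq\emptyset$ and $P_{\Fix T}$ is a well-defined linear projection by Fact~\ref{fact:projections}. Writing $x_0=P_{\Fix T}x_0+u$ with $u:=x_0-P_{\Fix T}x_0\in(\Fix T)^{\perp}$, linearity gives $T^nx_0=P_{\Fix T}x_0+T^nu$, because $T$ fixes $P_{\Fix T}x_0$. Thus the theorem reduces to the single claim that $T^nu\to 0$ in norm for every $u\in(\Fix T)^{\perp}$.

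The crux --- and what I expect to be the main obstacle --- is to identify $(\Fix T)^{\perp}$ in a way that interacts well with $T$; the natural candidate is $\overline{\range(I-T)}$. To obtain the orthogonal decomposition $\mathcal H=\Fix T\oplus\overline{\range(I-T)}$ I would prove the auxiliary fact that, for a linear nonexpansive $T$ (so that $\|T\|=\|T^*\|\leq 1$), one has $\ker(I-T)=\ker(I-T^*)$. Indeed, if $Tx=x$ then $\langle x,T^*x\rangle=\langle Tx,x\rangle=\|x\|^2$, so the contraction property gives
\[
  \|x-T^*x\|^2=\|x\|^2-2\langle x,T^*x\rangle+\|T^*x\|^2=\|T^*x\|^2-\|x\|^2\leq 0,
\]
whence $T^*x=x$; the reverse inclusion follows by symmetry. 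Since $(\range(I-T))^{\perp}=\ker(I-T^*)$ always holds for a bounded operator, this yields $\overline{\range(I-T)}=(\ker(I-T^*))^{\perp}=(\Fix T)^{\perp}$, as required.

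It then remains to show $T^nu\to 0$ for $u\in\overline{\range(I-T)}$. On the dense subset $\range(I-T)$ this is immediate from asymptotic regularity: if $u=v-Tv$ then $T^nu=T^nv-T^{n+1}v\to 0$. For general $u$ I would invoke the uniform bound $\|T^n\|\leq 1$: approximating $u$ within $\varepsilon$ by some $u'\in\range(I-T)$ gives $\limsup_n\|T^nu\|\leq\limsup_n\|T^nu'\|+\varepsilon=\varepsilon$, and letting $\varepsilon\downarrow 0$ finishes the argument. Combining this with the reduction of the first paragraph yields $T^nx_0\to P_{\Fix T}x_0$ in norm. Note that this route bypasses Opial's theorem entirely: asymptotic regularity together with the ergodic-type orthogonal splitting does all the work, and it simultaneously identifies the limit and upgrades convergence to norm.
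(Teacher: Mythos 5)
Your proof is correct and complete. Note that the paper does not actually prove this theorem; it simply cites \cite[Proposition~5.27]{bauschke2011convex}, and your argument is essentially the standard one underlying that citation: the orthogonal splitting $\mathcal H=\Fix T\oplus\overline{\range(I-T)}$, asymptotic regularity killing $T^n$ on the dense subspace $\range(I-T)$, and the uniform bound $\|T^n\|\leq 1$ to pass to the closure. The only step that genuinely requires care is the identity $\ker(I-T)=\ker(I-T^*)$ needed to identify $(\Fix T)^{\perp}$ with $\overline{\range(I-T)}$ (since a priori $(\range(I-T))^{\perp}$ is $\ker(I-T^*)$, not $\ker(I-T)$), and your computation $\|x-T^*x\|^2=\|T^*x\|^2-\|x\|^2\leq 0$ handles it cleanly using $\|T^*\|=\|T\|\leq 1$. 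All remaining steps (closedness of $\Fix T$, linearity of $P_{\Fix T}$, the $\varepsilon$-approximation) are routine and correctly executed, so there is nothing to fix.
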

\begin{proof}
 See, for example, \cite[Proposition~5.27]{bauschke2011convex}.  
\end{proof}

\begin{remark}
 A version of Theorem~\ref{thm:affine} was used by Halperin \cite{halperin1962product} to show that von Neumann's alternating projection, applied to finitely many  closed subspaces, converges in norm to the projection on the intersection of the subspaces.\footnote{Kakutani had earlier proven weak convergence for finitely many subspaces \cite{netyanun2006iterated}. Von Neumann's original two-set proof does not seem to generalize. }
\qede
\end{remark}

Summarizing, we have the following.

\begin{corollary} \label{cor:cvgt}
 Let $T_i:\mathcal H\to\mathcal H$ be firmly nonexpansive, for each $i$, with $\bigcap_{i=1}^r\Fix T_i\neq\emptyset$ and define $T:=T_r\dots T_2T_1$. Then for any $x_0\in\mathcal H$, $T^nx_0$ converges weakly to an element of $\Fix T=\bigcap_{i=1}^N\Fix T_i$. Moreover, if $T$ is linear, then $T^nx_0$ converges, in norm, to $P_{\Fix T}x_0$.
\end{corollary}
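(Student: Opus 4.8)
The plan is to recognize this corollary as the synthesis of the preceding lemmas and theorems, so the entire proof amounts to verifying the hypotheses of Opial's theorem (Theorem~\ref{thm:opial}) for $T$, and then, in the linear case, those of Theorem~\ref{thm:affine}. First I would establish that $T=T_r\cdots T_2 T_1$ is nonexpansive: each $T_i$ is firmly nonexpansive, hence nonexpansive, and as remarked earlier the composition of nonexpansive operators is again nonexpansive. Next I would pin down the fixed-point set. Since $\bigcap_{i=1}^r\Fix T_i\neq\emptyset$ by hypothesis, Lemma~\ref{lem:fixedpoints} gives $\Fix T=\bigcap_{i=1}^r\Fix T_i$, which is in particular nonempty. (Note that the index $N$ appearing in the displayed equality should read $r$, to match the definition of $T$ as a composition of $r$ factors.)

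The remaining hypothesis of Opial's theorem is asymptotic regularity, and this is precisely the content of Lemma~\ref{lem:asymregular}: the factors $T_i$ are firmly nonexpansive and, by the previous step, $\Fix T\neq\emptyset$, so $T$ is asymptotically regular. Having verified that $T$ is nonexpansive, asymptotically regular, and has nonempty fixed-point set, I would apply Theorem~\ref{thm:opial} to conclude that $T^n x_0$ converges weakly to some element of $\Fix T=\bigcap_{i=1}^r\Fix T_i$, which establishes the first assertion. For the ``moreover'' clause I would simply invoke Theorem~\ref{thm:affine}: if $T$ is linear, then it is linear, nonexpansive, and asymptotically regular (both properties already in hand), so that theorem applies directly and yields $T^n x_0\to P_{\Fix T}x_0$ in norm.

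I do not expect a genuine obstacle here, since each ingredient has been deliberately isolated in advance; the corollary is an assembly statement. The one conceptual point worth flagging—and the place where the real work actually resides—is asymptotic regularity. As the text emphasizes, the composition of firmly nonexpansive maps need not itself be firmly nonexpansive, so asymptotic regularity of $T$ does \emph{not} come for free from firm nonexpansiveness of the factors; it genuinely requires the nonemptiness of $\Fix T$ to drive Lemma~\ref{lem:asymregular}. Thus the logical order matters: one must first secure $\Fix T\neq\emptyset$ via Lemma~\ref{lem:fixedpoints} before asymptotic regularity, and hence weak (or norm) convergence, can be deduced.
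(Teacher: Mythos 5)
Your proposal is correct and follows exactly the same route as the paper: nonexpansivity of $T$ from composition, $\Fix T=\bigcap_{i=1}^r\Fix T_i\neq\emptyset$ via Lemma~\ref{lem:fixedpoints}, asymptotic regularity via Lemma~\ref{lem:asymregular}, and then Theorems~\ref{thm:opial} and~\ref{thm:affine}. Your observation that the $N$ in the displayed fixed-point identity should be $r$ is a correct catch of a typo, and your remark on the logical ordering (fixed points before asymptotic regularity) matches the paper's implicit structure.
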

\begin{proof}
 Since $T$ is the composition of nonexpansive operators, $T$ is nonexpansive. By  Lemma~\ref{lem:fixedpoints}, $\Fix T\neq\emptyset$. By Lemma~\ref{lem:asymregular}, $T$ is asymptotically regular.  The result now follows by Theorem~\ref{thm:opial} and Theorem~\ref{thm:affine}. 
\end{proof}

We note that the verification of many results in this section can be significantly simplified for the special cases we require.

\section{Cyclic Douglas--Rachford Iterations} \label{sec:cycDR}

We are now ready to introduce our first new projection algorithm, the \emph{cyclic Douglas--Rachford} iteration scheme. Let $C_1,C_2,\dots,C_N\subseteq\mathcal H$ and define $T_{[C_1\,C_2\,\dots\,C_N]}:\mathcal H\to\mathcal H$ by
  \begin{align*}
   T_{[C_1\,C_2\,\dots\,C_N]}
   &:= T_{C_N,C_1}T_{C_{N-1},C_N}\dots T_{C_2,C_3}T_{C_1,C_2} \\
   &= \left(\frac{I+R_{C_1}R_{C_N}}{2}\right) \left(\frac{I+R_{C_N}R_{C_{N-1}}}{2}\right)\dots \left(\frac{I+R_{C_3}R_{C_2}}{2}\right)\left(\frac{I+R_{C_2}R_{C_1}}{2}\right).
  \end{align*}
Given $x_0\in\mathcal H$, the \emph{cyclic Douglas--Rachford} method iterates by  repeatedly setting $${x_{n+1}=T_{[C_1\,C_2\,\dots\,C_N]}x_n}.$$

\begin{remark}
In the two set case, the cyclic Douglas--Rachford operator becomes
 $$T_{[C_1\,C_2]}=T_{C_2,C_1}T_{C_1,C_2}=\left(\frac{I+R_{C_1}R_{C_2}}{2}\right)\left(\frac{I+R_{C_2}R_{C_1}}{2}\right).$$
 That is, it does not coincide with the classic Douglas--Rachford scheme.\qede
\end{remark}

\begin{framed}
 Where there is no ambiguity, we take indices modulo $N$, and abbreviate $T_{C_i,C_j}$ by $T_{i,j}$, and $T_{[C_1\,C_2\,\dots\,C_N]}$ by $T_{[1\,2\,\dots\,N]}$. In particular, $T_{0,1}:=T_{N,1},\, T_{N,N+1}:=T_{N,1},\, C_0:=C_N$ and $C_{N+1}:=C_1$.
\end{framed}

Recall the following characterization of fixed points of the Douglas--Rachford operator.
\begin{lemma} \label{lem:drfp}
 Let $A,B\subseteq\mathcal H$ be closed and convex with nonempty intersection. Then 
  $$P_A\Fix T_{A,B}= A\cap B.$$
\end{lemma}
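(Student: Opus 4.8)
The plan is to prove the set equality $P_A \Fix T_{A,B} = A \cap B$ by establishing two inclusions, relying on the characterization of Douglas--Rachford fixed points together with Fact~\ref{fact:projections}(ii). The natural first move is to recall (or derive) the standard description of $\Fix T_{A,B}$: since $T_{A,B} = (I + R_B R_A)/2$, a point $x$ is fixed iff $R_B R_A x = x$, i.e. $R_A x$ is a fixed point of $R_B$ shifted appropriately. I would first show that $x \in \Fix T_{A,B}$ implies $P_A x = P_B R_A x$ and that this common point lies in $A \cap B$.

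Let me think about the two inclusions carefully.

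For the inclusion $P_A \Fix T_{A,B} \subseteq A \cap B$, take $x \in \Fix T_{A,B}$, so $R_B R_A x = x$. Writing $a = P_A x$, I want to show $a \in A \cap B$. Clearly $a \in A$. The fixed-point equation rearranges: since $R_A x = 2a - x$ and $R_B(R_A x) = x$, we get $P_B(R_A x) = (x + R_A x)/2 = (x + 2a - x)/2 = a$. So $a = P_B(R_A x) \in B$, giving $a \in A \cap B$. This is clean — the key algebraic identity is that the Douglas--Rachford fixed-point condition forces $P_A x$ and $P_B R_A x$ to coincide.

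For the reverse inclusion $A \cap B \subseteq P_A \Fix T_{A,B}$, I would take $c \in A \cap B$ and exhibit a fixed point $x$ with $P_A x = c$. The natural guess is $x = c$ itself: if $c \in A \cap B$ then $R_A c = c$ and $R_B c = c$, so $T_{A,B} c = c$, hence $c \in \Fix T_{A,B}$ and $P_A c = c$ (since $c \in A$). This shows $c \in P_A \Fix T_{A,B}$.

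So both inclusions are actually quite direct once the fixed-point equation is unpacked, and the full write-up would proceed as:

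\begin{proof}[Proof sketch]
We show both inclusions.

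$(\supseteq)$ Let $c \in A \cap B$. Then $P_A c = c$ and $P_B c = c$, so $R_A c = 2P_A c - c = c$ and likewise $R_B c = c$. Hence
$$T_{A,B}c = \frac{c + R_B R_A c}{2} = \frac{c+c}{2} = c,$$
so $c \in \Fix T_{A,B}$. Since $c \in A$, we have $P_A c = c$, and therefore $c = P_A c \in P_A \Fix T_{A,B}$.

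$(\subseteq)$ Let $x \in \Fix T_{A,B}$ and set $a := P_A x$. Then $a \in A$. The fixed-point equation $T_{A,B}x = x$ gives $R_B R_A x = x$, so
$$P_B(R_A x) = \frac{R_A x + R_B R_A x}{2} = \frac{R_A x + x}{2} = \frac{(2P_A x - x) + x}{2} = P_A x = a.$$
Thus $a = P_B(R_A x) \in B$, whence $a \in A \cap B$. Therefore $P_A \Fix T_{A,B} \subseteq A \cap B$.
\end{proof}

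**The main obstacle** I anticipate is essentially presentational rather than substantive: one must be careful that the fixed-point set is nonempty (guaranteed here since $A \cap B \neq \emptyset$, so that both $\Fix T_{A,B}$ and the projections are well-defined by Fact~\ref{fact:projections}(i)), and one must handle the algebra of reflections without sign errors. The cleanest route avoids invoking the characterizations in Fact~\ref{fact:projections}(ii)--(iii) altogether, since the purely algebraic manipulation of $R_A = 2P_A - I$ and the fixed-point identity $R_B R_A x = x$ suffices. I would double-check only that the identity $P_B(R_A x) = (R_A x + R_B R_A x)/2$ is applied correctly — it is just the definition $P_B = (I + R_B)/2$ evaluated at $R_A x$ — and that is the single place where a slip is most likely.
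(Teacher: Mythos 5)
Your proposal is correct and follows essentially the same route as the paper: the forward inclusion rests on the identity $P_A x = P_B R_A x$ extracted from the fixed-point equation $R_B R_A x = x$ (which is exactly the chain of equivalences the paper writes), and the reverse inclusion is the observation that any $c \in A \cap B$ is itself a fixed point with $P_A c = c$, which the paper leaves as ``straightforward to check.'' No gaps; your write-up simply makes both steps explicit.
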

\begin{proof}
 $P_A\Fix T_{A,B}\subseteq A\cap B$ since
 \begin{equation*}
  x\in\Fix T_{A,B}\iff\frac{x+R_{B}R_{A}x}{2}=x \iff P_{A}x=P_{B}R_{A}x\in A\cap B.
 \end{equation*}
 It is straightforward to check the reverse inclusion.
\end{proof}

We are now ready to present our main result regarding convergence of the cyclic Douglas--Rachford scheme.

\begin{theorem}[Cyclic Douglas--Rachford] \label{thm:main}
  Let $C_1,C_2,\dots,C_N\subseteq\mathcal H$ be closed and convex sets with a nonempty intersection. For any $x_0\in\mathcal H$, the sequence $T^n_{[1\,2\,\dots\,N]}x_0$ converges weakly to a point $x$ such that $P_{C_i}x = P_{C_j}x$, for all indices $i,j$. Moreover, $P_{C_j}x\in \bigcap_{i=1}^NC_i$, for each index $j$.
\end{theorem}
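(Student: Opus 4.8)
The plan is to realize $T_{[1\,2\,\dots\,N]}$ as a composition of firmly nonexpansive operators and apply the machinery assembled in Corollary~\ref{cor:cvgt}, and then separately identify the weak limit using the fixed-point characterization of Lemma~\ref{lem:drfp}. By Fact~\ref{fact:nonexpansive}, each factor $T_{i,i+1}$ is firmly nonexpansive, so the whole operator $T:=T_{[1\,2\,\dots\,N]}=T_{N,1}T_{N-1,N}\cdots T_{1,2}$ falls under the hypotheses of Corollary~\ref{cor:cvgt}, provided we can verify $\bigcap_{i=1}^N\Fix T_{i,i+1}\neq\emptyset$.

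First I would establish this nonemptiness. If $c\in\bigcap_{i=1}^N C_i$, then $R_{C_i}c=c$ and $R_{C_{i+1}}c=c$, whence $T_{i,i+1}c=\tfrac12(c+R_{C_{i+1}}R_{C_i}c)=c$; thus $\bigcap_{i=1}^N C_i\subseteq\bigcap_{i=1}^N\Fix T_{i,i+1}$, and the latter is nonempty by assumption. Corollary~\ref{cor:cvgt} then yields weak convergence of $T^nx_0$ to some $x\in\Fix T=\bigcap_{i=1}^N\Fix T_{i,i+1}$.

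The substance of the proof is to extract the stated geometric conclusion from the membership $x\in\bigcap_{i=1}^N\Fix T_{i,i+1}$. For each $i$, the equivalence displayed in the proof of Lemma~\ref{lem:drfp} gives $P_{C_i}x\in C_i\cap C_{i+1}$; in particular $P_{C_i}x\in C_{i+1}$. Since $P_{C_i}x$ is then a feasible competitor for the projection onto $C_{i+1}$, we obtain $d(x,C_{i+1})\leq\|P_{C_i}x-x\|=d(x,C_i)$. Reading this around the cycle (recall $C_{N+1}=C_1$) forces the chain $d(x,C_1)\geq d(x,C_2)\geq\cdots\geq d(x,C_N)\geq d(x,C_1)$ to collapse, so that all the distances $d(x,C_i)$ coincide.

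Finally I would invoke uniqueness of best approximations. Because $P_{C_i}x\in C_{i+1}$ and $\|P_{C_i}x-x\|=d(x,C_i)=d(x,C_{i+1})$, the point $P_{C_i}x$ attains the distance from $x$ to $C_{i+1}$; as $C_{i+1}$ is Chebyshev (Fact~\ref{fact:projections}(i)), this forces $P_{C_i}x=P_{C_{i+1}}x$. Transitivity around the cycle gives $P_{C_i}x=P_{C_j}x$ for all $i,j$, and since this common value equals $P_{C_j}x\in C_j$ for every $j$, it lies in $\bigcap_{i=1}^N C_i$. I expect the one genuinely delicate point to be the distance-monotonicity step together with the cyclic collapse; once equality of all the $d(x,C_i)$ is in hand, the Chebyshev uniqueness argument is immediate.
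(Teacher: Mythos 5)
Your proof is correct, and its skeleton coincides with the paper's: both verify via Fact~\ref{fact:nonexpansive} that each $T_{i,i+1}$ is firmly nonexpansive, observe $\bigcap_{i=1}^N\Fix T_{i,i+1}\supseteq\bigcap_{i=1}^N C_i\neq\emptyset$, apply Corollary~\ref{cor:cvgt} to obtain weak convergence to some $x\in\bigcap_{i=1}^N\Fix T_{i,i+1}$, and then use Lemma~\ref{lem:drfp} to conclude $P_{C_i}x\in C_{i+1}$ for each $i$. Where you genuinely diverge is in the final step that forces the projections to coincide. The paper evaluates the telescoping sum $\frac{1}{2}\sum_{i=1}^N\|P_{C_i}x-P_{C_{i-1}}x\|^2=\sum_{i=1}^N\langle x-P_{C_i}x,\,P_{C_{i-1}}x-P_{C_i}x\rangle$ and bounds each summand by $0$ using the variational characterization of projections (Fact~\ref{fact:projections}(ii)) together with $P_{C_{i-1}}x\in C_i$, concluding that every term of the nonnegative sum vanishes. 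You instead argue metrically: $P_{C_i}x\in C_{i+1}$ gives $d(x,C_{i+1})\leq\|x-P_{C_i}x\|=d(x,C_i)$, the cyclic chain of inequalities collapses to equality of all the distances, and Chebyshev uniqueness (Fact~\ref{fact:projections}(i)) then identifies $P_{C_i}x$ with $P_{C_{i+1}}x$. Both arguments are sound and of comparable length; yours is more elementary in that it needs only the definition of the distance function and uniqueness of nearest points rather than the inner-product inequality, while the paper's identity has the minor advantage of quantifying the defect $\sum_i\|P_{C_i}x-P_{C_{i-1}}x\|^2$ directly in terms of the variational inequalities, which is the form reused verbatim in the proofs of Theorems~\ref{thm:general} and~\ref{thm:mainaveraged}.
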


\begin{proof}
By Fact~\ref{fact:nonexpansive}, $T_{i,i+1}$ is firmly nonexpansive, for each $i$. Further,
$$\bigcap_{i=1}^N\Fix T_{i,i+1}\supseteq\bigcap_{i=1}^NC_i\neq\emptyset.$$
By Corollary~\ref{cor:cvgt}, $T^n_{[1\,2\,\dots\,N]}x_0$ converges weakly to a point $x\in\Fix T_{[1\,2\,\dots\,N]}=\bigcap_{i=1}^N\Fix T_{i,i+1}$. By Lemma~\ref{lem:drfp}, $P_{C_{i}}x\in C_{i+1}$, for each $i$. Now we compute
 \begin{align*}
  \frac{1}{2}\sum_{i=1}^N \|P_{C_{i}}x-P_{C_{i-1}}x\|^2
  &= \langle x,0\rangle+\frac{1}{2}\sum_{i=1}^N \left( \|P_{C_{i}}x\|^2-2\langle P_{C_{i}}x,P_{C_{i-1}}x\rangle +\|P_{C_{i-1}}x\|^2\right)\\
  &= \left\langle x, \sum_{i=1}^N (P_{C_{i-1}}x-P_{C_{i}}x)\right\rangle -\sum_{i=1}^N\langle P_{C_{i}}x,P_{C_{i-1}}x\rangle+ \sum_{i=1}^N \|P_{C_{i}}x\|^2 \\
  &=\sum_{i=1}^N \langle x-P_{C_{i}}x,P_{C_{i-1}}x-P_{C_{i}}x\rangle \stackrel{\text{Fact~\ref{fact:projections}}}{\leq} 0.
 \end{align*}
Thus, $P_{C_{i}}x=P_{C_{i-1}}x$, for each $i$; and we are done. 
\end{proof}

Again by invoking Opial's Theorem, a more general version of Theorem~\ref{thm:main} can be abstracted.

\begin{theorem} \label{thm:general}
 Let $C_1,C_2,\dots,C_N\subseteq\mathcal H$ be closed and convex sets  with nonempty intersection, let  $T_j:\mathcal H\to\mathcal H$, for each $j$, and define $T:=T_N\dots T_2T_1$. Suppose the following three properties hold.
  \begin{enumerate}
  \item $T=T_M\dots T_2T_1$, is nonexpansive and asymptotically regular,
  \item $\Fix T=\bigcap_{j=1}^M\Fix T_j\neq\emptyset$,
  \item $P_{C_j}\Fix T_j\subseteq C_{j+1}$, for each $j$.
 \end{enumerate}
 Then, for any $x_0\in\mathcal H$, the sequence $T^nx_0$ converges weakly to a point $x$ such that $P_{C_i}x=P_{C_j}x$ for all $i,j$. Moreover, $P_{C_j}x\in\bigcap_{i=1}^NC_i$, for each $j$.
\end{theorem}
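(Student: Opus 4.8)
The plan is to follow the proof of Theorem~\ref{thm:main} verbatim, replacing each concrete feature of the Douglas--Rachford operators by the abstract hypothesis that isolates it. First I would invoke Opial's Theorem~\ref{thm:opial}: by property~1 the operator $T$ is nonexpansive and asymptotically regular, and by property~2 its fixed point set is nonempty, so for any $x_0\in\mathcal H$ the iterates $T^nx_0$ converge weakly to some $x\in\Fix T$. The crucial consequence of property~2 is the \emph{identification} $\Fix T=\bigcap_j\Fix T_j$, which upgrades the weak limit from a fixed point of the composite $T$ to a \emph{simultaneous} fixed point of every factor $T_j$. This is exactly what makes property~3 usable: since $x\in\Fix T_j$ for each $j$, we obtain $P_{C_j}x\in P_{C_j}\Fix T_j\subseteq C_{j+1}$, i.e. the cyclic membership $P_{C_j}x\in C_{j+1}$ (equivalently $P_{C_{i-1}}x\in C_i$) that in Theorem~\ref{thm:main} was supplied by Lemma~\ref{lem:drfp}.

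With this membership in hand, the second half of the argument is purely geometric and identical to the computation in the proof of Theorem~\ref{thm:main}. I would expand $\tfrac12\sum_i\|P_{C_i}x-P_{C_{i-1}}x\|^2$ and regroup it as $\sum_i\langle x-P_{C_i}x,\,P_{C_{i-1}}x-P_{C_i}x\rangle$, the cross terms telescoping around the cycle because indices are read modulo $N$. Each summand is then $\le 0$ by the characterization of projections (Fact~\ref{fact:projections}(ii)), applied with the point $P_{C_{i-1}}x$, which lies in $C_i$ precisely by the cyclic membership just established. Hence the nonnegative quantity $\tfrac12\sum_i\|P_{C_i}x-P_{C_{i-1}}x\|^2$ is also $\le 0$, forcing every term to vanish and yielding $P_{C_i}x=P_{C_{i-1}}x$ for all $i$; running around the cycle gives $P_{C_i}x=P_{C_j}x$ for all $i,j$. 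Finally, since all projections agree and each $P_{C_j}x\in C_{j+1}$, their common value lies in every $C_i$ and therefore in $\bigcap_{i=1}^N C_i$.

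I do not expect a genuine obstacle, since the theorem is essentially a re-packaging of Theorem~\ref{thm:main} in which the three hypotheses are exactly the three facts that its proof extracted from the Douglas--Rachford structure (firm nonexpansiveness together with Lemma~\ref{lem:asymregular} gives property~1; Lemma~\ref{lem:fixedpoints} gives property~2; Lemma~\ref{lem:drfp} gives property~3). The one point demanding care is logical rather than computational: I must apply property~3 to the weak limit $x$, and this is legitimate only because property~2 guarantees that $x$ is a common fixed point of all the $T_j$, not merely of the composite $T$. I would also flag the mild notational mismatch between $T=T_N\cdots T_1$ in the preamble and $T=T_M\cdots T_1$ in property~1, reading the count of operators as matching the number of sets so that the cyclic (mod~$N$) indexing in the telescoping sum is well defined.
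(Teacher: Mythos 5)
Your proposal is correct and follows exactly the paper's own route: the paper likewise invokes Opial's theorem (via properties 1 and 2) to obtain a weak limit $x\in\Fix T=\bigcap_j\Fix T_j$, uses property 3 in place of Lemma~\ref{lem:drfp} to get the cyclic membership $P_{C_j}x\in C_{j+1}$, and then repeats the telescoping inner-product computation from Theorem~\ref{thm:main}. Your remark about the $N$ versus $M$ indexing is a fair observation about the statement, but it does not affect the argument.
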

\begin{proof}
 By Theorem~\ref{thm:opial}, $T^nx_0$ converges weakly to point $x\in \Fix T$. The remainder of the proof is the same as Theorem~\ref{thm:main}. 
\end{proof}

\begin{remark} We give a sample of  examples of operators which satisfy the three conditions of Theorem~\ref{thm:general}.
\begin{enumerate}
 \item $T_{[A_1\,A_2\,\dots\,A_M]}$ where $A_j\in \{C_1,C_2\,\dots C_N\}$, and is such that each $C_i$ appear in the sequence $A_1,A_2,\dots,A_M$ at least once.
 \item $T$ is any composition of $P_{C_1},P_{C_2},\dots,P_{C_N}$, such that each projection appears in said composition at least once. In particular, setting $T=P_{C_N}\dots P_{C_2}P_{C_1}$ we recover  Bregman's seminal result \cite{bregman1965method}.
 \item $T_j = (I+\mathbf P_j)/2$ where $\mathbf P_j$ is any composition of $P_{C_1},P_{C_2},\dots,P_{C_N}$ such that, for each $i$, there exists a $j$ such that $\mathbf P_j=P_{C_i}Q_j$ for some composition of projections $Q_j$. A special case is,
  $$T=\left(\frac{I+P_{C_1}P_{C_N}}{2}\right)\dots\left(\frac{I+P_{C_3}P_{C_2}}{2}\right)\left(\frac{I+P_{C_2}P_{C_1}}{2}\right).$$
 \item If $T_1,T_2\dots,T_M$ are operators satisfying the conditions of Theorem~\ref{thm:general}, replacing $T_j$ with the relaxation $\alpha_j I+(1-\alpha_j)T_j$ where $\alpha_j\in]0,1/2]$, for each $i$. Note the relaxations are firmly nonexpansive \cite[Remark~4.27]{bauschke2011convex}.
 %\item (A special Krasnosel'ski\u{i}-Mann iteration) Replacing $T$ with the relaxation $\alpha I+(1-\alpha) T$ where $\alpha\in]0,1[$. For details, see \cite[Theorem~5.14] {bauschke2011convex} or \cite{borwein1992krasnoselski}. %Not sure if this satifies asymptotic regularity or not. However, will still work.
\end{enumerate}

Of course, there are many other applicable variants. For instance, Krasnoselski--Mann iterations (see \cite[Theorem~5.14] {bauschke2011convex} and \cite{borwein1992krasnoselski}). \qede\end{remark}

We now investigate the cyclic Douglas--Rachford iteration in the special-but-common case where the initial point lies in one of the target sets; most especially the first target set.

\begin{corollary} \label{cor:main}
 Let $C_1,C_2,\dots,C_N\subseteq\mathcal H$ be closed and convex sets with a nonempty intersection. If $y\in C_i$ then $T_{i,i+1}y=P_{C_{i+1}}y$. In particular, if $x_0\in C_1$, the cyclic Douglas--Rachford trajectory coincides with that of von Neumann's alternating projection method.
\end{corollary}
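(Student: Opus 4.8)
The plan is to prove the two assertions in order: first the pointwise identity $T_{i,i+1}y=P_{C_{i+1}}y$ for $y\in C_i$, which is a short computation, and then the statement about trajectories, which follows by iterating the first identity along the cyclic composition. First I would note that if $y\in C_i$ then $P_{C_i}y=y$, so the reflection fixes $y$: $R_{C_i}y=2P_{C_i}y-y=y$. Substituting into the definition $T_{i,i+1}=(I+R_{C_{i+1}}R_{C_i})/2$ gives
\[
 T_{i,i+1}y=\frac{y+R_{C_{i+1}}R_{C_i}y}{2}=\frac{y+R_{C_{i+1}}y}{2}=\frac{y+(2P_{C_{i+1}}y-y)}{2}=P_{C_{i+1}}y,
\]
which proves the first claim. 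The key consequence to record is that $T_{i,i+1}$ maps $C_i$ into $C_{i+1}$, since the output $P_{C_{i+1}}y$ lies in $C_{i+1}$ by definition of the projection.

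For the second assertion I would apply this identity repeatedly along the cyclic operator $T_{[1\,2\,\dots\,N]}=T_{N,1}T_{N-1,N}\dots T_{2,3}T_{1,2}$, read right to left. Starting from $x_0\in C_1$, the innermost factor gives $T_{1,2}x_0=P_{C_2}x_0\in C_2$; applying the identity again, $T_{2,3}(P_{C_2}x_0)=P_{C_3}P_{C_2}x_0\in C_3$; and continuing through the whole cycle,
\[
 T_{[1\,2\,\dots\,N]}x_0=P_{C_1}P_{C_N}\dots P_{C_3}P_{C_2}x_0.
\]
This is precisely one sweep of von Neumann's cyclic alternating projection method, projecting in the order $C_2,C_3,\dots,C_N,C_1$.

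The one point requiring care---though it is not a genuine obstacle---is that the hypothesis $y\in C_i$ of the first part must be re-established at each stage for the argument to propagate, and likewise across successive iterations of $T_{[1\,2\,\dots\,N]}$. But the displayed expression already supplies this: the final factor is $P_{C_1}$, so $x_1:=T_{[1\,2\,\dots\,N]}x_0\in C_1$, returning the iterate to the first set. An immediate induction on $n$ then shows that whenever $x_n\in C_1$ we have $x_n\in C_1$ preserved and $x_{n+1}=P_{C_1}P_{C_N}\dots P_{C_2}x_n$. Hence the entire cyclic Douglas--Rachford trajectory from $x_0\in C_1$ coincides term-by-term with that of the (cyclic) alternating projection method, completing the proof.
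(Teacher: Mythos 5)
Your proof is correct and follows essentially the same route as the paper's: both use $R_{C_i}y=y$ for $y\in C_i$ to reduce $T_{i,i+1}y$ to $P_{C_{i+1}}y$, then propagate this through the cyclic composition to obtain $T_{[1\,2\,\dots\,N]}x_0=P_{C_1}P_{C_N}\dots P_{C_2}x_0\in C_1$ and conclude by induction. Your write-up merely makes explicit the membership bookkeeping that the paper leaves to the reader.
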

\begin{proof}
 For any $y\in\mathcal H$, $T_{i,i+1}y=P_{C_{i+1}}y \iff R_{C_{i+1}}y=R_{C_{i+1}}R_{C_i}y$. If $y\in C_i$ then $R_{C_i}y=y$. In particular, if $x_0\in C_1$ then
 \begin{equation*}
  T_{[1\,2\,\dots\,N]}x_0=T_{N,1}\dots T_{2,3}T_{1,2}y = P_{C_1}P_{C_N}\dots P_{C_2}x_0\in C_1,
 \end{equation*}
and the result follows.
\end{proof}

\begin{remark} \label{remark:notvn}  If $x_0\not\in C_1$, then the cyclic Douglas--Rachford trajectory need not coincide with von Neumann's alternating projection method. We give an example involving two closed subspaces with codimension $1$ (see Figure~\ref{fig:cor1}). Define
 $$C_1:=\{x\in\mathcal H:\langle a_1,x\rangle=0\},\quad C_2:=\{x\in\mathcal H:\langle a_2,x\rangle =0\},$$
where $a_1,a_2\in\mathcal H$ such that $\langle a_1,a_2\rangle\neq 0$. By scaling if necessary, we may assume that $\|a_1\|=\|a_2\|=1$. Then one has,
$$P_{C_1}x=x-\langle a_1,x\rangle a_1,\qquad P_{C_2}x=x-\langle a_2,x\rangle a_2,$$
and
\begin{align*}
 T_{1,2}x &= x+2P_{C_2}P_{C_1}x-(P_{C_1}x+P_{C_2}x) \\
          &= x-\langle a_1,x\rangle a_1-\langle a_2,x\rangle a_2+2\langle a_1,a_2\rangle\langle a_1,x\rangle a_2.
\intertext{Similarly,}
 T_{2,1}x &= x-\langle a_1,x\rangle a_1-\langle a_2,x\rangle a_2+2\langle a_1,a_2\rangle\langle a_2,x\rangle a_1.
\end{align*}
By Remark~\ref{remark:twoaffine},
\begin{align*}
2\langle a_1,T_{[1\,2]x}\rangle
 &=\langle a_1,T_{1,2}x\rangle+\langle a_1,T_{2,1}x\rangle \\
 &= \langle a_1,x\rangle-\langle a_1,x\rangle \|a_1\|^2-\langle a_2,x\rangle\langle a_1,a_2\rangle \\
 &\quad + \langle a_1,a_2\rangle\langle a_2,x\rangle\|a_1\|^2+\langle a_1,a_2\rangle^2\langle a_1,x\rangle \\
 &= \langle a_1,a_2\rangle ^2\langle a_1,x\rangle.
\end{align*}
Similarly, $2\langle a_2,T_{[1\,2]x}\rangle=\langle a_1,a_2\rangle^2\langle a_2,x\rangle$.

Thus, if $\langle a_i,x\rangle\neq0$, for each $i$, then $\langle a_i,T_{[1\,2]}x\rangle\neq0$, for each $i$. In particular, if ${x_0\not\in C_1\cup C_2}$, then none of the cyclic Douglas--Rachford iterates lie in $C_1$ or $C_2$.

A second example, involving a ball and an affine subspace is illustrated in Figure~\ref{fig:cor2}. \qede

\begin{figure*}
 \begin{center}
  \includegraphics[width=0.75\textwidth]{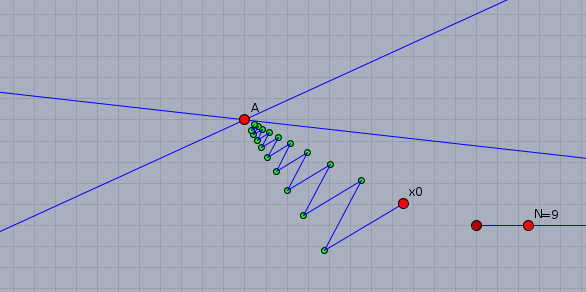}
  \caption{An interactive \emph{Cinderella} applet showing a cyclic Douglas--Rachford trajectory  differing from von Neumann's alternating projection method. Each green dot represents a $2$-set Douglas--Rachford iteration.}\label{fig:cor1}
 \end{center}
\end{figure*}

\begin{figure*}
 \begin{center}
  \includegraphics[width=0.66\textwidth]{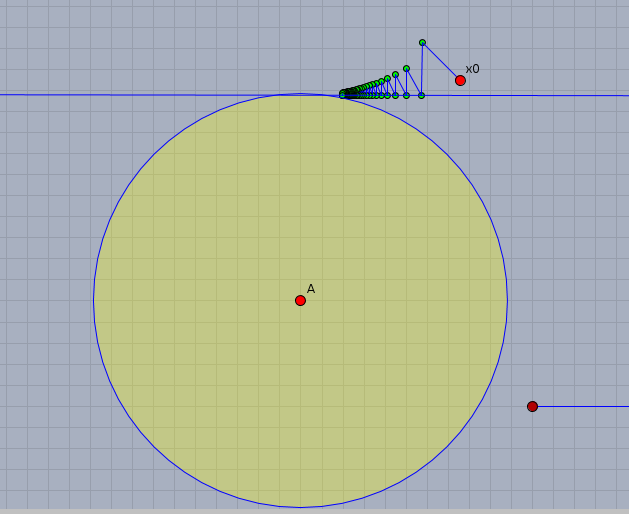}
    \caption{An interactive \emph{Cinderella} applet showing a cyclic Douglas--Rachford trajectory  differing from von Neumann's alternating projection method. Each green dot represents a $2$-set Douglas--Rachford iteration.}\label{fig:cor2}
 \end{center}
\end{figure*}

\end{remark}

\begin{remark}[A product version]\label{remark:productspace} We now consider the classical product formulation of (\ref{eq:cfp}). Define two subsets of $\mathcal H^N$:
 \begin{equation}\label{eq:productsets}
  C := \prod_{i=1}^NC_i,\quad D := \{(x,x,\dots,x)\in\mathcal H^N:x\in\mathcal H\},
 \end{equation}
which are both closed and convex (in fact, $D$ is a subspace). Consider the $2$-set convex feasibility problem
 \begin{equation}
  \text{Find }\mathbf x\in C\cap D\subseteq\mathcal H^N. \label{eq:prod}
 \end{equation}
Then (\ref{eq:cfp}) is equivalent to (\ref{eq:prod}) in the sense that
 $$x\in\bigcap_{i=1}^NC_i \iff (x,x,\dots,x)\in C\cap D.$$
Further the projections, and hence reflections, are easily computed since
 $$P_C\mathbf x=\prod_{i=1}^NP_{C_i}\mathbf x_i,\quad P_D\mathbf x=\prod_{i=1}^N\left(\frac{1}{N}\sum_{j=1}^N\mathbf x_j\right).$$
Let $\mathbf x_0\in D$ and define $\mathbf x_n:=T_{[D\,C]}\mathbf x_{n-1}$. Then Corollary~\ref{cor:main} yields
 $$T_{[D\,C]}\mathbf x_n = P_DP_C\mathbf x_n =\left(\frac{1}{N}\sum_{i=1}^NP_{C_i},\frac{1}{N}\sum_{i=1}^NP_{C_i},\dots,\frac{1}{N}\sum_{i=1}^NP_{C_i}\right).$$
That is, if---as is reasonable---we start in $D$, the cyclic Douglas--Rachford method coincides with averaged projections.

In general, the iteration is based on
\begin{equation}
 T_{[D\,C]}\mathbf x =\mathbf x-P_D\mathbf x+2P_DP_CT_{D,C}\mathbf x-P_CT_{D,C}\mathbf x + P_CR_D\mathbf x-P_DP_CR_D\mathbf x. \label{eq:productgeneral}
\end{equation}
If $\mathbf x=(x_1,x_2,\dots,x_N)$, then the $i$th coordinate of (\ref{eq:productgeneral}) can be expressed as
\begin{align*}
(T_{[D\,C]}\mathbf x)_i
 &= x_i-\frac{1}{N}\sum_{j=1}^N x_j +\frac{2}{N}\sum_{j=1}^N P_{C_j}(T_{D,C}\mathbf x)_j- P_{C_i}(T_{D,C}\mathbf x)_i \\
 &\qquad\qquad+P_{C_i}\left(\frac{2}{N}\sum_{j=1}^Nx_j-x_i\right) - \frac{1}{N}\sum_{j=1}^NP_{C_j}\left(\frac{2}{N}\sum_{k=1}^Nx_k-x_j\right), \\
\intertext{where}
 (T_{D,C}\mathbf x)_j &= x_j-\frac{1}{N}\sum_{k=1}^Nx_k +P_{C_j}\left(\frac{2}{N}\sum_{k=1}^Nx_k-x_j\right),
\end{align*}
which is a considerably more complex formula.
\qede\end{remark}

Let $A,B\subseteq\mathcal H$. Recall that  points $(x,y)\in A\times B$ form a \emph{best approximation pair} relative to $(A,B)$ if
 \begin{equation*}
   \|x-y\|=d(A,B):=\inf\{\|a-b\|:a\in A,b\in B\}.
 \end{equation*}

\begin{remark} \label{remark:singleton}
\noindent \emph{(a)} Consider $C_1=B_{\mathcal H}:=\{x\in\mathcal H:\|x\|\leq 1\}$ and $C_2=\{y\}$, for some $y\in\mathcal H$. Then
 $$T_{[1\,2]}x = x-P_{C_1}x+P_{C_1}(y-x+P_{C_1}x),$$
where $P_{C_1}z=z$ if $z\in C_1$, and $z/\|z\|$ otherwise. Now,
\begin{equation} \label{eq:bestapproxfp}
 x\in\Fix T_{[1\,2]} \iff P_{C_1}x=P_{C_1}(y-x+P_{C_1}x).
\end{equation}
Thus,
 \begin{itemize}
  \item If $x\in C_1$ then $x=P_{C_1}y$.
  \item If $y-x+P_{C_1}x\in C_1$ then $x=y$.
  \item Else, $\|x\|>1$ and $\|y-x+P_Ax\|>1$. By (\ref{eq:bestapproxfp}),
    $$x=\lambda y\text{ where } \lambda = \left(\frac{\|x\|}{\|y-x+P_{C_1}x\|+\|x\|-1}\right)\in]0,1[.$$
    Moreover, since $1<\|x\|=\lambda\|y\|$, we obtain $\lambda\in]1/\|y\|,1[.$
 \end{itemize}
In each case, $P_{C_1}x=P_{C_1}y$ and $P_{C_2}x=y$. Therefore $(P_{C_1}x,P_{C_2}x)$ is a best approximation pair relative to $(C_1,C_2)$ (see Figure~\ref{fig:bestapprox1}). In particular, if $C_1\cap C_2\neq\emptyset$, then $P_{C_1}y=y$ and, by Theorem~\ref{thm:main}, the cyclic Douglas--Rachford scheme weakly converges to $y$, the unique element of $C_1\cap C_2$.

When $C_1\cap C_2=\emptyset$, Theorem~\ref{thm:main} cannot be invoked to guarantee convergence. However, the above analysis provides the information that
 $$\Fix T_{[1\,2]}\subseteq\{\lambda P_{C_1}y+(1-\lambda)y:\lambda\in [0,1]\}.$$

\noindent \emph{(b)} Suppose instead, $C_1=S_{\mathcal H}:=\{x\in\mathcal H:\|x\|=1\}$.  A similar analysis can be performed. If $y\neq 0$ and $x\in\Fix T_{[1\,2]}$  are such that $x,y-x+P_{C_1}x\neq 0$, then
 \begin{itemize}
  \item If $x\in C_1$ then $x=P_{C_1}y$.
  \item If $y-x+P_{C_1}x\in C_1$ then $x=y$.
  \item Else, $x=\lambda y$ where
    $$\lambda = \left(\frac{\|x\|}{\|y-x+P_{C_1}x\|+\|x\|-1}\right) \geq \left(\frac{\|x\|}{\|y-x\|+\|P_{C_1}x\|+\|x\|-1}\right)>0.$$
 \end{itemize}
Again, $(P_{C_1}x,P_{C_2}x)$ is a best approximation pair relative to $(C_1,C_2)$.
\qede\end{remark}

\begin{figure*}
 \begin{center}
  \includegraphics[width=0.75\textwidth]{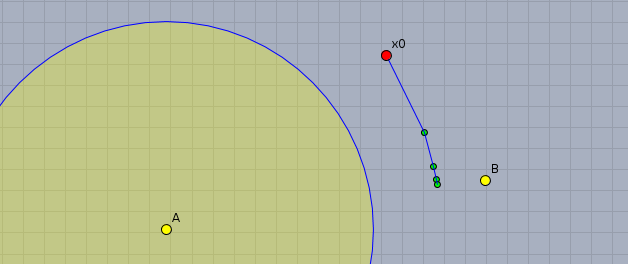}
  \caption{An interactive \emph{Cinderella} applet showing the behaviour described in Remark~\ref{remark:singleton}. Each green dot represents a cyclic Douglas--Rachford iteration.}\label{fig:bestapprox1}
 \end{center}
\end{figure*}

Experiments with interactive \emph{Cinderella}\footnote{See \url{http://www.cinderella.de/}.} dynamic geometry applets, suggest similar behaviour of the cyclic Douglas--Rachford method applied to many other problems for which $C_1\cap C_2=\emptyset$. For example, see Figure~\ref{fig:bestapprox2}. This suggests the following conjecture.

\begin{conj} \label{conj}
Let $C_1,C_2\subseteq\mathcal H$ be closed and convex with $C_1\cap C_2=\emptyset$. Suppose that a best approximation pair relative to $(C_1,C_2)$ exists. Then the two-set cyclic Douglas--Rachford scheme converges weakly to a point $x$ such that $(P_{C_1}x,P_{C_2}x)$ is a best approximation pair relative to the sets $(C_1,C_2)$.
\end{conj}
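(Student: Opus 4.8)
The plan is to run the same Opial-based argument used for Theorem~\ref{thm:main} on $T_{[1\,2]}=T_{2,1}T_{1,2}$, the one genuinely new issue being that $\bigcap_i\Fix T_{i,i+1}=\emptyset$ in the inconsistent case, so nonemptiness of $\Fix T_{[1\,2]}$ can no longer be inherited from a common fixed point. First I would record the elementary characterization that a pair $(a,b)\in C_1\times C_2$ is a best approximation pair relative to $(C_1,C_2)$ if and only if $P_{C_1}b=a$ and $P_{C_2}a=b$; both implications follow from Fact~\ref{fact:projections}(ii) applied to the gap vector $g:=b-a$, which is the unique minimal-norm element of $\overline{C_2-C_1}$ and hence satisfies $\langle g,\beta-\alpha\rangle\geq\|g\|^2$ for all $\alpha\in C_1,\beta\in C_2$ (in particular $\|g\|=d(C_1,C_2)$).

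With this in hand the existence of fixed points is immediate. Given a best approximation pair $(a,b)$, since $a\in C_1$ we have $R_{C_1}a=P_{C_1}a=a$, so $T_{1,2}a=a-P_{C_1}a+P_{C_2}R_{C_1}a=P_{C_2}a=b$; since $b\in C_2$, likewise $T_{2,1}b=P_{C_1}b=a$. Hence $T_{[1\,2]}a=T_{2,1}b=a$, so $a\in\Fix T_{[1\,2]}\neq\emptyset$. Now $T_{[1\,2]}$ is a composition of the firmly nonexpansive maps $T_{1,2},T_{2,1}$ (Fact~\ref{fact:nonexpansive}), hence nonexpansive, and by Lemma~\ref{lem:asymregular} it is asymptotically regular. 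Opial's theorem (Theorem~\ref{thm:opial}) therefore yields $T^n_{[1\,2]}x_0\wto \bar x$ for some $\bar x\in\Fix T_{[1\,2]}$, which already secures the weak convergence asserted in the conjecture.

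It remains to show that $(P_{C_1}\bar x,P_{C_2}\bar x)$ is a best approximation pair, and this is where the real work lies. The clean structural fact I would extract is that every $x\in\Fix T_{[1\,2]}$ has displacement exactly $g$ under the first half-step: writing $y:=T_{1,2}x$ (so $T_{2,1}y=x$) and comparing with the fixed orbit $a\mapsto b\mapsto a$ above, firm nonexpansivity of $T_{1,2}$ at $(x,a)$ gives $\|y-b\|^2\leq\langle y-b,\,x-a\rangle$ and firm nonexpansivity of $T_{2,1}$ at $(y,b)$ gives $\|x-a\|^2\leq\langle x-a,\,y-b\rangle$. Adding these and invoking Cauchy--Schwarz forces equality throughout, whence $y-x=b-a=g$. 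Equivalently $R_{C_2}R_{C_1}x=x+2g$, which unpacks to the identity $P_{C_2}R_{C_1}x-P_{C_1}x=g$; the symmetric treatment of $T_{2,1}y=y-g$ yields the twin identity $P_{C_1}R_{C_2}y-P_{C_2}y=-g$.

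The main obstacle is passing from this displacement identity to $P_{C_2}\bar x-P_{C_1}\bar x=g$: since $P_{C_1}\bar x\in C_1$ and $P_{C_2}\bar x\in C_2$, that equality would immediately make the shadow pair attain $d(C_1,C_2)$ and hence be a best approximation pair. The difficulty is that the identity controls $P_{C_2}R_{C_1}\bar x$, not $P_{C_2}\bar x$, and these differ in general when $\bar x\notin C_1$; indeed, feeding $P_{C_2}R_{C_1}\bar x=P_{C_1}\bar x+g$ into the projection inequality of Fact~\ref{fact:projections}(ii) produces only a lower bound of the wrong sign, so the two points are not forced to coincide by this data alone. I would attack the gap by mining the equality cases further — they also force $R_{C_1},R_{C_2}$ to act isometrically on the relevant pairs, giving orthogonality relations such as $\langle \bar x-P_{C_1}\bar x,\,a-P_{C_1}\bar x\rangle=0$ — and by coupling the two displacement identities. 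I expect this to be delicate precisely because degenerate configurations (a singleton or a half-space constraint) admit fixed points lying strictly between the two sets, so any argument must accommodate them; it is this stubborn final identification that keeps the statement a conjecture rather than a theorem. A complementary route worth pursuing is to transfer the known inconsistent-case best-approximation theory of the classical Douglas--Rachford operator to the cyclic operator, or to track the shadow sequences $P_{C_1}x_n,P_{C_2}x_n$ directly, thereby circumventing the failure of weak continuity of the projections at the limit.
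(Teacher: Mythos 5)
The statement you are attempting is presented in the paper only as Conjecture~\ref{conj}: the authors offer no proof, just numerical evidence plus the observation that the claim holds in the special case where some iterate lands in $C_1$ or $C_2$ (there, by Corollary~\ref{cor:main}, the scheme degenerates to alternating projections and one can quote Cheney--Goldstein). So there is no ``paper proof'' to compare against. Judged on its own terms, everything you actually assert is correct, and your partial results go beyond what the paper establishes. The characterization of a best approximation pair via $P_{C_1}b=a$ and $P_{C_2}a=b$ is right; the computation $T_{1,2}a=P_{C_2}a=b$ and $T_{2,1}b=P_{C_1}b=a$ correctly shows $\Fix T_{[1\,2]}\neq\emptyset$ whenever such a pair exists, so Lemma~\ref{lem:asymregular} and Theorem~\ref{thm:opial} do give weak convergence of $T^n_{[1\,2]}x_0$ to some $\bar x\in\Fix T_{[1\,2]}$ --- a case Theorem~\ref{thm:main} cannot reach, since $\bigcap_i\Fix T_{i,i+1}=\emptyset$ here. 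Your displacement identity is also correct: playing the two firm-nonexpansiveness inequalities against each other and forcing equality in Cauchy--Schwarz yields $T_{1,2}x=x+g$ on $\Fix T_{[1\,2]}$, equivalently $P_{C_2}R_{C_1}x-P_{C_1}x=g$; since these two points lie in $C_2$ and $C_1$ respectively and differ by the gap vector, you have in fact shown that $\bigl(P_{C_1}\bar x,\,P_{C_2}R_{C_1}\bar x\bigr)$ is a best approximation pair.

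The gap is exactly where you place it: nothing proved so far identifies $P_{C_2}\bar x$ with $P_{C_2}R_{C_1}\bar x$, and the conjecture concerns the shadow pair $(P_{C_1}\bar x,P_{C_2}\bar x)$, not the reflected-shadow pair. Fact~\ref{fact:projections}(ii) applied at $R_{C_1}\bar x$ controls the wrong inner products, and weak convergence of the iterates does not pass to the shadows because projections are not weakly continuous. So your attempt is an honest, error-free partial argument rather than a proof --- which is consistent with the statement's status as a conjecture in this paper. Closing it requires a finer analysis of $\Fix T_{[1\,2]}$ in the inconsistent case (the direction the authors pursued in later work), not merely the equality cases you have already extracted.
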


\begin{remark}
If there exists an integer $n$ such that either $T_{[1\,2]}^{n}x_0\in C_1$ or $T_{1,2}T_{[1\,2]}^{n}x_0\in C_2$, by Corollary~\ref{cor:main}, the cyclic Douglas--Rachford scheme coincides with von Neumann's alternating projection method. In this case, Conjecture~\ref{conj} holds by \cite[Theorem~2]{cheney1959proximity}. In this connection, we also refer the reader to \cite{bauschke1993convergence,bauschke1994dykstra}.

 It is not hard to think of non-convex settings in which Conjecture~\ref{conj} is false. For example, in $\mathbb R$, let $C_1=[0,1]$ and $C_2=\{0,\frac{11}{10}\}$. If $x_0=1$ then $T_{[1\,2]}x_0=x_0$, but $$(P_{C_1}(1),P_{C_2}(1))=(1,\frac{11}{10}),$$ which is not a best approximation pair relative to $(C_1,C_2)$.\qede
\end{remark}

\begin{figure*}
 \begin{center}
  \includegraphics[width=0.75\textwidth]{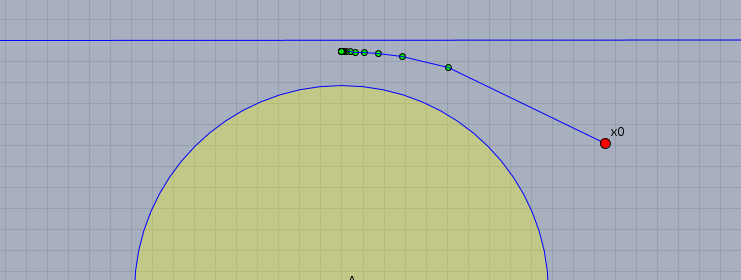}
  \caption{An interactive \emph{Cinderella} applet showing the cyclic Douglas--Rachford method applied to the case of a non-intersecting  ball and a line.  The method appears convergent to a point whose projections onto the constraint sets form a best approximation pair. Each green dot represents a cyclic Douglas--Rachford iteration.} \label{fig:bestapprox2}
 \end{center}
\end{figure*}

We now present an averaged version of our cyclic Douglas--Rachford iteration.
\begin{theorem}[Averaged Douglas--Rachford] \label{thm:mainaveraged}
  Let $C_1,C_2,\dots,C_N\subseteq\mathcal H$ be closed and convex sets with a nonempty intersection. For any $x_0\in\mathcal H$, the sequence defined by
   $$x_{n+1}:=\left(\frac{1}{N}\sum_{i=1}^NT_{i,i+1}\right)x_n$$
 converges weakly to a point $x$ such that $P_{C_i}x = P_{C_j}x$ for all indices $i,j$. Moreover, ${P_{C_j}x\in \bigcap_{i=1}^NC_i}$, for each index $j$.
\end{theorem}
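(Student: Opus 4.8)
The plan is to fit the averaged operator $S:=\frac{1}{N}\sum_{i=1}^N T_{i,i+1}$ into exactly the framework already used for Theorem~\ref{thm:main}, so that once I know $S^n x_0$ converges weakly to a common fixed point $x\in\bigcap_{i=1}^N\Fix T_{i,i+1}$, the conclusion about coinciding projections follows verbatim from the telescoping computation carried out there. Thus the only genuinely new work is to verify that $S$ is nonexpansive and asymptotically regular with $\Fix S=\bigcap_{i=1}^N\Fix T_{i,i+1}\neq\emptyset$, after which I invoke Opial's Theorem (Theorem~\ref{thm:opial}).

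First I would show that $S$ is firmly nonexpansive. The cleanest route is the standard characterization that $U$ is firmly nonexpansive if and only if $2U-I$ is nonexpansive (a short parallelogram-identity computation). Since $2S-I=\frac{1}{N}\sum_{i=1}^N(2T_{i,i+1}-I)$ is a convex combination of the nonexpansive operators $2T_{i,i+1}-I$---each $T_{i,i+1}$ being firmly nonexpansive by Fact~\ref{fact:nonexpansive}---it is itself nonexpansive, so $S$ is firmly nonexpansive, and in particular nonexpansive. This convex-combination argument is precisely where the averaged scheme behaves better than a composition, for which firm nonexpansivity may fail.

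Next I would identify the fixed points. The inclusion $\bigcap_{i=1}^N\Fix T_{i,i+1}\subseteq\Fix S$ is immediate, and this set is nonempty because it contains $\bigcap_{i=1}^N C_i$, exactly as in the proof of Theorem~\ref{thm:main}. For the reverse inclusion I would use the elementary firmly-nonexpansive estimate $\|T_{i,i+1}y-z\|^2\leq\|y-z\|^2-\|y-T_{i,i+1}y\|^2$, valid for any $z\in\Fix T_{i,i+1}$. Fixing $z\in\bigcap_{i=1}^N C_i$ and applying convexity of $\|\cdot\|^2$ to $Sy-z=\frac{1}{N}\sum_{i=1}^N(T_{i,i+1}y-z)$ yields
$$\|Sy-z\|^2\leq\|y-z\|^2-\frac{1}{N}\sum_{i=1}^N\|y-T_{i,i+1}y\|^2.$$
If $Sy=y$, the left-hand side equals $\|y-z\|^2$, forcing $T_{i,i+1}y=y$ for every $i$; hence $\Fix S=\bigcap_{i=1}^N\Fix T_{i,i+1}$.

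With $\Fix S\neq\emptyset$ in hand, Lemma~\ref{lem:firmlynonexpansivewithfp} delivers asymptotic regularity of $S$, and Opial's Theorem (Theorem~\ref{thm:opial}) then produces a point $x\in\Fix S=\bigcap_{i=1}^N\Fix T_{i,i+1}$ with $S^n x_0\wto x$. From here the argument is identical to Theorem~\ref{thm:main}: Lemma~\ref{lem:drfp} gives $P_{C_i}x\in C_{i+1}$ for each $i$, and the computation $\frac{1}{2}\sum_{i=1}^N\|P_{C_i}x-P_{C_{i-1}}x\|^2=\sum_{i=1}^N\langle x-P_{C_i}x,\,P_{C_{i-1}}x-P_{C_i}x\rangle\leq 0$ (using Fact~\ref{fact:projections}) forces all the projections to coincide and to lie in $\bigcap_{i=1}^N C_i$. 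The main obstacle is really only the fixed-point identification for the average, since the excerpt records the composition versions (Lemmas~\ref{lem:asymregular} and~\ref{lem:fixedpoints}) but no averaged analogue; every other ingredient reuses the machinery already developed.
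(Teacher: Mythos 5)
Your proposal is correct, but it takes a genuinely different route from the paper. The paper proves Theorem~\ref{thm:mainaveraged} by lifting to the product space $\mathcal H^N$: with $C,D$ as in (\ref{eq:productsets}) it sets $T:=P_D\bigl(\prod_{i=1}^NT_{i,i+1}\bigr)$, observes that this is a composition of two firmly nonexpansive maps with a common fixed point, invokes Corollary~\ref{cor:cvgt} and Lemma~\ref{lem:fixedpoints}, and then notes that starting from a diagonal point the iteration stays in $D$ and its common coordinate is exactly the averaged iterate $\frac1N\sum_iT_{i,i+1}x_n$; the limit's coordinate then lies in $\bigcap_i\Fix T_{i,i+1}$ and the telescoping computation of Theorem~\ref{thm:main} finishes as in your write-up. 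You instead stay in $\mathcal H$ and treat the average $S$ head-on: firm nonexpansivity of $S$ via the (correct, standard) characterization that $U$ is firmly nonexpansive iff $2U-I$ is nonexpansive, and the identification $\Fix S=\bigcap_i\Fix T_{i,i+1}$ via the quantitative estimate $\|Sy-z\|^2\leq\|y-z\|^2-\frac1N\sum_i\|y-T_{i,i+1}y\|^2$ for $z\in\bigcap_iC_i$, which is a clean Fej\'er-type argument. Both are sound. Your route is more elementary and self-contained (no product-space bookkeeping), at the cost of importing two small facts the paper does not record (the $2U-I$ characterization and the fixed-point identity for convex combinations of firmly nonexpansive maps); the paper's route buys reuse of the composition machinery already assembled in Section~\ref{sec:preliminaries} and, as a byproduct, exhibits the averaged scheme as a product-space composition, which is precisely what motivates the remark that each $2$-set Douglas--Rachford step can be computed in parallel.
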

\begin{proof}
 Consider $C,D\subseteq\mathcal H^N$ as (\ref{eq:productsets}) and define $T:=P_D(\prod_{i=1}^NT_{i,i+1})$. By Fact~\ref{fact:nonexpansive}, $P_D$ is firmly nonexpansive. By Fact~\ref{fact:nonexpansive}, $T_{i,i+1}$ is firmly nonexpansive in $\mathcal H$, for each $i$, hence $\prod_{i=1}^NT_{i,i+1}$ is firmly nonexpansive in $\mathcal H^N$. Further, $\Fix (\prod_{i=1}^NT_{i,i+1})\cap P_D\supseteq C\cap D\neq\emptyset$. By Corollary~\ref{cor:cvgt}, $\mathbf x_n$ converges weakly to a point $\mathbf x\in \Fix T$.

Let $\mathbf x_0=(x_0,x_0,\dots,x_0)\in\mathcal H^N$. Since $T\mathbf x_n\in D$, for each $n$, we write $\mathbf x_n=(x_n,x_n,\dots,x_n)$ for some $x_n\in\mathcal H$. Then
 $$x_{n+1}=(T\mathbf x_{n+1})_i=\left(\frac{1}{N}\sum_{i=1}^NT_{i,i+1}\right) x_n,$$
independent of $i$. Similarly, since $\mathbf x\in\Fix P_D=D$, we write $\mathbf x=(x,x,\dots,x)\in\mathcal H^N$ for some $x\in\mathcal H$. Since $\mathbf x\in\Fix(\prod_{i=1}^NT_{i,i+1})$, $x\in \Fix T_{i,i+1}$, for each $i$, and hence $P_{C_i}x\in C_{i+1}$. The same computation as in Theorem~\ref{thm:main} now completes the proof.
\end{proof}

 Since each $2$-set Douglas--Rachford iteration can be computed independently, the averaged iteration is easily parallelizable.

\section{Affine Constraints} \label{sec:norm}
In this section we observe that the conclusions of Theorems~\ref{thm:main} and \ref{thm:mainaveraged} can be strengthened when the constraints are affine.

\begin{lemma}[Translation formula] \label{lem:translation}
 Let $C_1',C_2',\dots,C_N'\subseteq\mathcal H$ be closed and convex sets with a nonempty intersection. For fixed $y\in\mathcal H$, define $C_i:=y+C_i'$, for each $i$. Then
  $$T_{C_i,C_{i+1}}x = y + T_{C_i',C_{i+1}'}(x-y),$$
and
  $$T_{[C_1\,C_2\,\dots\,C_N]}x = y + T_{[C_1'\,C_2'\,\dots\,C_N']}(x-y).$$
\end{lemma}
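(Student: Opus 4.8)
The plan is to reduce everything to the translation formula for projections, Fact~\ref{fact:projections}(iv), and then to exploit the fact that each two-set Douglas--Rachford operator built from the translated sets is obtained from the one built from the original sets by conjugating with the translation map. I would begin by recording the analogous translation formula for reflections. Writing $R_{y+C'} = 2P_{y+C'} - I$ and applying Fact~\ref{fact:projections}(iv), a one-line computation gives
$$R_{y+C'}(x) = 2\bigl(y + P_{C'}(x-y)\bigr) - x = y + \bigl(2P_{C'}(x-y) - (x-y)\bigr) = y + R_{C'}(x-y).$$

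For the first identity, I would substitute this into the definition $T_{C_i,C_{i+1}} = (I + R_{C_{i+1}}R_{C_i})/2$. Applying the reflection formula twice,
$$R_{C_{i+1}}R_{C_i}x = R_{y+C_{i+1}'}\bigl(y + R_{C_i'}(x-y)\bigr) = y + R_{C_{i+1}'}R_{C_i'}(x-y),$$
so that averaging with $x$ and regrouping the constant term $y$ yields $T_{C_i,C_{i+1}}x = y + T_{C_i',C_{i+1}'}(x-y)$, which is exactly the claimed formula.

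For the second identity, the cleanest route is to observe that the first identity says precisely that each factor is a conjugate of the corresponding primed factor by the translation $\tau(x) := x + y$; that is, $T_{C_i,C_{i+1}} = \tau \circ T_{C_i',C_{i+1}'} \circ \tau^{-1}$, where $\tau^{-1}(x) = x - y$. Writing out the composition defining $T_{[C_1\,C_2\,\dots\,C_N]}$ and inserting these conjugations, every interior pair $\tau^{-1}\tau$ cancels telescopically, leaving $\tau \circ T_{[C_1'\,C_2'\,\dots\,C_N']} \circ \tau^{-1}$; evaluating at $x$ then gives the stated formula. Alternatively, one can argue by induction on the number of factors, but the conjugation observation makes the cancellation transparent and avoids the bookkeeping.

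This lemma is essentially a routine verification, so I do not anticipate a genuine obstacle. The only points requiring care are keeping track of the constant translation term through the reflection-of-a-reflection computation, and stating the telescoping cancellation in the composition explicitly rather than merely asserting it.
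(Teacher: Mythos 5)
Your proposal is correct and follows essentially the same route as the paper: both derive the translation formula for reflections from Fact~\ref{fact:projections}(iv), substitute it into the definition of $T_{C_i,C_{i+1}}$ to get the first identity, and then obtain the second by propagating the translation through the composition (the paper phrases this as ``iterating,'' which is the same telescoping cancellation you describe via conjugation).
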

\begin{proof}
 By the translation formula for projections (Fact~\ref{fact:projections}), we have
  $$R_{C_i}x = y+R_{C_i'}(x-y),\text{ for each }i.$$
 The first result follows since,
 \begin{align*}
  T_{C_i,C_{i+1}}x
  &= \frac{x+R_{C_{i+1}}R_{C_i}x}{2}  = \frac{x+R_{C_{i+1}}(y+R_{C_i'}(x-y))}{2} \\
  &= \frac{x+y+R_{C_{i+1}'}R_{C_i'}(x-y)}{2} = y+\frac{(x-y)+R_{C_{i+1}'}R_{C_i'}(x-y)}{2} \\
  &= y+T_{C_i',C_{i+1}'}(x-y).
 \end{align*}
 Iterating gives,
 $$ T_{C_2,C_3}T_{C_1,C_2} = T_{C_2,C_3}(y+T_{C_1',C_2'}(x-y)) = y+T_{C_2',C_3'}T_{C_1',C_2'}(x-y),$$
 from which the second result follows. 
\end{proof}

\begin{theorem}[Norm convergence] \label{thm:mainaffine}
 Let $C_1,C_2,\cdots,C_N\subseteq\mathcal H$ be closed affine subspaces with a nonempty intersection. Then, for any $x_0\in\mathcal H$,
  $$\lim_{n\to\infty}T^n_{[C_1\,C_2\,\dots\,C_N]}x_0=P_{\Fix T_{[C_1\,C_2\,\dots\,C_N]}}x_0,$$
  is norm convergent.
\end{theorem}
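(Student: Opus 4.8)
The plan is to remove the affineness by a single translation, reducing to a composition of Douglas--Rachford operators on \emph{linear} subspaces, apply the linear norm-convergence machinery already assembled in Corollary~\ref{cor:cvgt} (built on Theorem~\ref{thm:affine}), and then translate back, using the translation formulae to identify the limit with the asserted projection. First, since $\bigcap_{i=1}^N C_i\neq\emptyset$, fix a point $y\in\bigcap_{i=1}^N C_i$ and set $C_i':=C_i-y$ for each $i$. Because each $C_i$ is an affine subspace containing $y$, each $C_i'$ is a closed \emph{linear} subspace and $0\in\bigcap_{i=1}^N C_i'$, so this intersection is again nonempty. By Fact~\ref{fact:projections}(vi) each $P_{C_i'}$ is linear, hence so is each reflection $R_{C_i'}=2P_{C_i'}-I$, each two-set operator $T_{C_i',C_{i+1}'}=(I+R_{C_{i+1}'}R_{C_i'})/2$, and consequently the full composition $S:=T_{[C_1'\,C_2'\,\dots\,C_N']}$.

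Next I would verify the hypotheses needed for the linear conclusion. By Fact~\ref{fact:nonexpansive} each $T_{C_i',C_{i+1}'}$ is firmly nonexpansive, and
$$\bigcap_{i=1}^N\Fix T_{C_i',C_{i+1}'}\supseteq\bigcap_{i=1}^N C_i'\neq\emptyset.$$
Hence Corollary~\ref{cor:cvgt} applies to $S$: it is nonexpansive and asymptotically regular, $\Fix S=\bigcap_{i=1}^N\Fix T_{C_i',C_{i+1}'}\neq\emptyset$, and---crucially, since $S$ is linear---$S^n z\to P_{\Fix S}z$ in norm for every $z\in\mathcal H$. Note that $\Fix S=\ker(I-S)$ is a closed linear subspace, so the projection $P_{\Fix S}$ is well defined.

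Finally I would transfer this back to $T:=T_{[C_1\,C_2\,\dots\,C_N]}$. By Lemma~\ref{lem:translation} we have $Tx=y+S(x-y)$ for all $x$, i.e. $T=\tau_y\circ S\circ\tau_{-y}$ where $\tau_y$ denotes translation by $y$; iterating gives $T^n x_0=y+S^n(x_0-y)$. Letting $n\to\infty$ and applying the previous paragraph with $z=x_0-y$ shows $T^n x_0$ converges in norm to $y+P_{\Fix S}(x_0-y)$. It remains to recognize this limit as $P_{\Fix T}x_0$. The conjugacy $T=\tau_y\circ S\circ\tau_{-y}$ yields $x\in\Fix T\iff x-y\in\Fix S$, so $\Fix T=y+\Fix S$ is an affine subspace; hence by the translation formula for projections, Fact~\ref{fact:projections}(iv),
$$P_{\Fix T}x_0=P_{y+\Fix S}x_0=y+P_{\Fix S}(x_0-y),$$
which is exactly the limit just found.

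The computation is essentially mechanical once the reduction is set up; the one step requiring genuine care is this last identification of the limit, where one must correctly pass between the \emph{affine} fixed-point set $\Fix T$ and the \emph{linear} fixed-point set $\Fix S$ and invoke the translation formula for projections, rather than pretending the underlying operators are themselves linear (they are only affine on the untranslated sets).
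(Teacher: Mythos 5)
Your proposal is correct and follows essentially the same route as the paper: translate by a common point of the intersection to reduce to linear subspaces, apply Corollary~\ref{cor:cvgt} (via Theorem~\ref{thm:affine}) to the now-linear composition, and translate back via Lemma~\ref{lem:translation}. Your final step, identifying $y+P_{\Fix S}(x_0-y)$ with $P_{\Fix T}x_0$ through $\Fix T=y+\Fix S$ and Fact~\ref{fact:projections}(iv), is exactly the justification the paper leaves implicit in its last displayed equality.
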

\begin{proof}
 Let $c\in\cap_{i=1}^N C_i$. Since $C_i$ are affine we write $C_i=c+C_i'$, where $C_i'$ is a closed subspace.  Since $T_{C_i',C_{i+1}'}$ is linear, for each $i$, so is $T_{[C_1'\,C_2'\,\dots\,C_N']}$. By Fact~\ref{fact:nonexpansive}, for each $i$, $T_{C_i',C_{i+1}'}$ is firmly nonexpansive. Further, $\cap_{i=1}^N\Fix T_{C_{i}',C_{i+1}'}\supseteq\cap_{i=1}^NC_i'\neq\emptyset$. By Lemma~\ref{lem:translation} and Corollary~\ref{cor:cvgt},
 $$T_{[C_1\,C_2\,\dots\,C_N]}^nx
   =c+T_{[C_1'\,C_2'\,\dots\,C_N']}^n(x-c)
   \to c+P_{\Fix T_{[C_1'\,C_2'\,\dots\,C_N']}}(x-c)=P_{\Fix T_{[C_1\,C_2\,\dots\,C_N]}}x.$$
   This completes the proof. 
\end{proof}

\begin{remark}\label{remark:twoaffine} For the case of two closed affine subspaces, the iteration becomes
$$T_{[A\,B]}=T_{B,A}T_{A,B}=\frac{I+R_BR_A+R_AR_B+R_AR_BR_BR_A}{4}=\frac{2I+R_BR_A+R_AR_B}{4}=\frac{T_{A,B}+T_{B,A}}{2}.$$
That is, the cyclic Douglas--Rachford and averaged Douglas--Rachford methods coincide.

For $N>2$ closed affine subspaces, the two methods do not always coincide. For instance, when $N=3$,
\begin{align*}
  T_{[1\,2\,3]} &= T_{3,1}T_{2,3}T_{1,2} \\
  &= I -(P_{C_1}+P_{C_2}+P_{C_3})+(P_{C_1}P_{C_3}+P_{C_2}P_{C_1}+P_{C_3}P_{C_2}+P_{C_3}P_{C_1}+P_{C_1}P_{C_2})\\
   &\quad -(P_{C_3}P_{C_2}P_{C_1}+P_{C_1}P_{C_3}P_{C_2}+P_{C_1}P_{C_3}P_{C_1}+P_{C_1}P_{C_2}P_{C_1})+2P_{C_1}P_{C_3}P_{C_2}P_{C_1} ,
\end{align*}
which includes a term which is the composition of four projection operators.\qede
\end{remark}

\begin{theorem}[Averaged norm convergence]
 Let $C_1,C_2,\cdots,C_N\subseteq\mathcal H$ be closed affine subspaces with a nonempty intersection. Then, in norm
  $$\lim_{n\to\infty}\left(\frac{1}{M}\sum_{i=1}^NT_{C_i,C_{i+1}}\right)^nx_0=P_{\Fix T_{[C_1\,C_2\,\dots\,C_N]}}x_0.$$
\end{theorem}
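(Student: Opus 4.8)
The plan is to follow the template of Theorem~\ref{thm:mainaffine}, reducing the affine case to the subspace case via the translation formula and then invoking the linear norm-convergence result Theorem~\ref{thm:affine}. Writing $S:=\frac1N\sum_{i=1}^N T_{C_i,C_{i+1}}$ (so that the undefined $M$ is read as $N$), I would first fix $c\in\bigcap_{i=1}^N C_i$ and write $C_i=c+C_i'$ with each $C_i'$ a closed subspace. By Lemma~\ref{lem:translation}, $T_{C_i,C_{i+1}}x=c+T_{C_i',C_{i+1}'}(x-c)$ for each $i$, so averaging gives $Sx=c+S'(x-c)$, where $S':=\frac1N\sum_{i=1}^N T_{C_i',C_{i+1}'}$. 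An immediate induction yields $S^nx=c+S'^n(x-c)$, so it suffices to analyse the purely linear operator $S'$ and then undo the translation.

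Next I would verify that $S'$ satisfies the hypotheses of Theorem~\ref{thm:affine}. Linearity is clear: for subspaces the reflections $R_{C_i'}$ are linear, hence so is each $T_{C_i',C_{i+1}'}=(I+R_{C_{i+1}'}R_{C_i'})/2$, and a convex combination of linear operators is linear. For nonexpansiveness and asymptotic regularity I would use that a convex combination of firmly nonexpansive operators is again firmly nonexpansive: recalling that $T$ is firmly nonexpansive precisely when $2T-I$ is nonexpansive, we have $2S'-I=\frac1N\sum_{i=1}^N R_{C_{i+1}'}R_{C_i'}$, a convex combination of nonexpansive operators (Fact~\ref{fact:nonexpansive}), hence nonexpansive; thus $S'$ is firmly nonexpansive. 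Since $0\in\bigcap_i C_i'\subseteq\bigcap_i\Fix T_{C_i',C_{i+1}'}\subseteq\Fix S'$, Lemma~\ref{lem:firmlynonexpansivewithfp} gives that $S'$ is asymptotically regular. Theorem~\ref{thm:affine} then yields $S'^n(x-c)\to P_{\Fix S'}(x-c)$ in norm, whence $S^nx\to c+P_{\Fix S'}(x-c)$.

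The step requiring the most care---and the main obstacle---is to identify the limit $c+P_{\Fix S'}(x-c)$ with the asserted $P_{\Fix T_{[C_1\,C_2\,\dots\,C_N]}}x$; this amounts to showing that the average $S$ and the cyclic composition $T_{[C_1\,C_2\,\dots\,C_N]}$ share the same fixed-point set, namely $\bigcap_{i=1}^N\Fix T_{C_i,C_{i+1}}$. For the composition this is exactly Lemma~\ref{lem:fixedpoints}. For the average the inclusion $\bigcap_i\Fix T_{C_i,C_{i+1}}\subseteq\Fix S$ is immediate, and for the reverse I would argue as follows: if $Sx=x$ and $z\in\bigcap_i\Fix T_{C_i,C_{i+1}}$, then nonexpansiveness gives $\|T_{C_i,C_{i+1}}x-z\|\le\|x-z\|$ for each $i$, while
$$\|x-z\|=\left\|\frac1N\sum_{i=1}^N(T_{C_i,C_{i+1}}x-z)\right\|\le\frac1N\sum_{i=1}^N\|T_{C_i,C_{i+1}}x-z\|\le\|x-z\|.$$
Equality throughout, together with strict convexity of the Hilbert norm, forces all the vectors $T_{C_i,C_{i+1}}x-z$ to coincide, hence $T_{C_i,C_{i+1}}x=Sx=x$ for every $i$, i.e. $x\in\bigcap_i\Fix T_{C_i,C_{i+1}}$. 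Thus $\Fix S=\Fix T_{[C_1\,C_2\,\dots\,C_N]}$, and after translation $\Fix S=c+\Fix S'$, so by the translation formula for projections (Fact~\ref{fact:projections}) $c+P_{\Fix S'}(x-c)=P_{\Fix S}x=P_{\Fix T_{[C_1\,C_2\,\dots\,C_N]}}x$, completing the argument.
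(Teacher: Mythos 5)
Your proof is correct, but it takes a genuinely different route from the paper's. The paper lifts the problem to the product space $\mathcal H^N$: with $C,D$ as in (\ref{eq:productsets}) it studies the operator $T=P_D\bigl(\prod_{i=1}^NT_{C_i',C_{i+1}'}\bigr)$, a linear composition of two firmly nonexpansive maps, applies the translation formula and Corollary~\ref{cor:cvgt} to get norm convergence to a diagonal point $\mathbf z=(z,\dots,z)$, and then identifies $z$ via the distance estimate $\sqrt N\,\|x_0-z\|=d(\mathbf x_0,\Fix T)\le d\bigl(\mathbf x_0,(\cap_{i}\Fix T_{i,i+1})^N\bigr)=\sqrt N\,d(x_0,\cap_i\Fix T_{i,i+1})$, which forces $z=P_{\cap_i\Fix T_{i,i+1}}x_0$. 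You instead remain in $\mathcal H$, observe that the averaged operator $S$ is itself a firmly nonexpansive linear map, invoke Theorem~\ref{thm:affine} directly, and identify the limit by proving $\Fix S=\bigcap_i\Fix T_{i,i+1}$ through the equality case of the triangle inequality. The work in either proof is the identification of the limit; your version buys a cleaner, product-space-free argument and a reusable fact (an average of firmly nonexpansive maps with a common fixed point has precisely the common fixed-point set as its fixed-point set), while the paper's version runs in exact parallel with its proof of Theorem~\ref{thm:mainaveraged}, where the product-space operator is genuinely needed. Two details worth spelling out: the equivalence ``$T$ firmly nonexpansive $\iff$ $2T-I$ nonexpansive'' is standard but nowhere stated in the paper (it is implicit in how Fact~\ref{fact:nonexpansive} treats $T_{A,B}$), and in the strict-convexity step you should note that equality in $\frac1N\sum_i\|T_{i,i+1}x-z\|\le\|x-z\|$ forces every $\|T_{i,i+1}x-z\|$ to equal $\|x-z\|$, so that positive collinearity together with equal norms (the case $x=z$ being trivial) yields that all the vectors coincide with their average $x-z$.
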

\begin{proof}
 Let $C,D\subseteq\mathcal H^N$ as in (\ref{eq:productsets}). Let $c\in\cap_{i=1}^NC_i$ and define $\mathbf c=(c,c,\dots,c)\in\mathcal H^N$. Since $C_i$ are affine we may write $C_i=c+C_i'$, where $C_i'$ is a closed subspace, and hence $C=\mathbf c+C'$ where $C'=\prod_{i=1}^NC_i'$.

For convenience, let $Q$ denote $\prod_{i=1}^NT_{C_i',C_{i+1}'}$ and let $T=P_DQ$. Since $C'$ and $D$ are subspaces, $T$ is linear. By Fact~\ref{fact:nonexpansive}, $T_{C_i',C_{i+1}'}$ is firmly nonexpansive, hence so is $Q$. Further, $\Fix T\supseteq\Fix Q\cap\Fix P_D \supseteq \Fix Q\cap D\neq\emptyset$ since $\cap_{i=1}^NC_i'\neq\emptyset$.

As a consequence of Lemma~\ref{lem:translation}, we have the translation formula
 $$T\mathbf x = \mathbf c +T(\mathbf x-\mathbf c), \text{ for any }\mathbf x\in\mathcal H^N.$$
As in the proof of Theorem~\ref{thm:mainaffine}, the translation formula, together with Corollary~\ref{cor:cvgt}, shows $T^n\mathbf x_0\to P_{\ker T}\mathbf x_0=:\mathbf z$ where $\mathbf x_0=(x_0,x_0,\dots,x_0)\in\mathcal H^N$. As $\mathbf x_n\in D$, we may write ${\mathbf x_n=(x_n,x_n,\dots,x_n)}$ for some $x_n \in {\mathcal H}$. Similarly, we write $\mathbf z=(z,z,\dots,z)$. Then
 \begin{align*}
  \sqrt{N}\|x_0-z\| & =\|\mathbf x_0-\mathbf z\| =d(\mathbf x_0,\Fix T) \\
   &\leq d(\mathbf x_0,(\cap_{i=1}^N\Fix T_{i,i+1})^N)
   =\sqrt{N}\,d(x_0,\cap_{i=1}^N\Fix T_{i,i+1}).
 \end{align*}
Hence, $z=P_{\cap_{i=1}^N\Fix T_{i,i+1}}x_0$. By Lemma~\ref{lem:fixedpoints}, $\Fix T_{[C_1\,C_2\,\dots\,C_N]}=\cap_{i=1}^N\Fix T_{i,i+1}$, and the proof is complete. 
\end{proof}

%\begin{figure*}
% \begin{center}
%  \includegraphics[width=0.6\textwidth]{cdy2.png}
%  \caption{An interactive \emph{Cinderella} applet using the cyclic Douglas--Rachford method to solve a feasibility problem with a near-tangent line and a sphere/ball constraint. Each green dot represents a $2$-set Douglas--Rachford iteration.}
% \end{center}
%\end{figure*}

\begin{figure*}
 \begin{center}
  \includegraphics[width=0.75\textwidth]{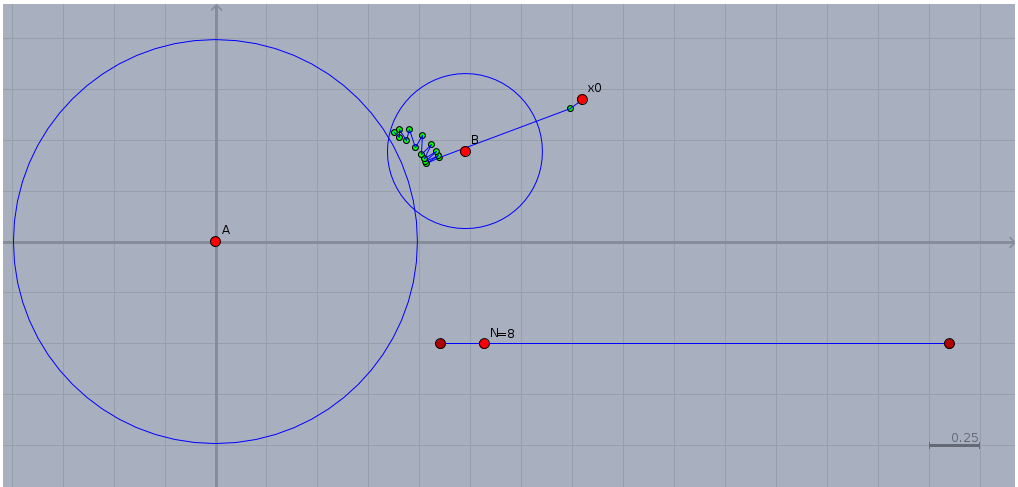}
  \caption{An interactive \emph{Cinderella} applet using the cyclic Douglas--Rachford method to solve a feasibility problem with two sphere constraints. Each green dot represents a $2$-set Douglas--Rachford iteration.}\label{fig:twospherescdy}
 \end{center}
\end{figure*}

\section{Numerical Experiments} \label{sec:experiments}

In this section we present the results of computational experiments comparing the cyclic Douglas--Rachford and (product-space) Douglas--Rachford schemes---as serial algorithms. These are not intended to be a complete computational study, but simply a first demonstration of viability of the method. From that vantage-point, our initial results are promising.

Two classes of feasibility problems were considered, the first convex and the second non-convex.
\begin{flalign*}
 \text{(P1)} &&&\text{Find }x\in\bigcap_{i=1}^NC_i\subseteq\mathbb R^n\text{ where }C_i=x_i+r_iB_{\mathcal H}:=\{y:\|x_i-y\|\leq r_i\},&\\
 \text{(P2)} &&&\text{Find }x\in\bigcap_{i=1}^NC_i\subseteq\mathbb R^n\text{ where }C_i=x_i+r_iS_{\mathcal H}:=\{y:\|x_i-y\|= r_i\}.&
\end{flalign*}
Here $B_{\mathcal H}$ (resp. $S_{\mathcal H}$) denotes the closed unit ball (resp. unit sphere).

To ensure all problem instances were feasible, constraint sets were randomly generated using the following criteria.
\begin{itemize}
  \item Ball constraints: Randomly choose $x_i\in[-5,5]^n$ and $r_i\in[\|x_i\|,\|x_i\|+0.1]$.
  \item Sphere constraints: Randomly choose $x_i\in[-5,5]^n$ and set $r_i=\|x_i\|$.
\end{itemize}
In each cases, by design, the non-empty intersection contains the origin. We consider both over- and under-constrained instances. 

Note, if $C_i$ is a sphere constraint then $P_{C_i}(x_i)=C_i$ (i.e., nearest points are not unique), and $P_{C_i}$ a set-valued mapping. In this situation, a random nearest point was chosen from $C_i$. In every other case, $P_{C_i}$ is single valued.

For the comparison, the classical Douglas--Rachford scheme was applied to the equivalent feasibility problem  (\ref{eq:prod}), which is formulated in the product space $(\mathbb R^n)^N$.

Computations were performed using Python~2.6.6 on an Intel Xeon E5440 at 2.83GHz (single threaded) running 64-bit Red Hat Enterprise Linux 6.4. The following conditions were used.
\begin{itemize}
 \item Choose a random $x_0\in [-10,10]^n$. Initialize the cyclic Douglas--Rachford scheme with $x_0$, and the parallel Douglas--Rachford scheme with $(x_0,x_0,\dots,x_0)\in (\mathbb R^n)^N$.
 \item Iterate by setting
          $$x_{n+1}=Tx_n\text{ where }T=T_{[1\,2\,\dots\,N]}\text{ or }T_{C,D}.$$
       An iteration limit of $1000$ was enforced.
 \item Stopping criterion:
        $$\|x_n-x_{n+1}\|<\epsilon \text{ where } \epsilon=10^{-3}\text{ or }10^{-6}.$$
 \item After termination, the quality of the solution was measured by
        $$\text{error} =\sum_{i=2}^N\|P_{C_1}x-P_{C_{i}}x\|^2.$$
\end{itemize}

\begin{figure*}
 \begin{center}
  \includegraphics[width=0.75\textwidth]{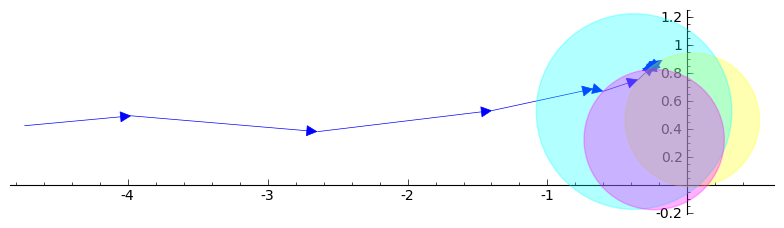}
  \caption{Cyclic Douglas--Rachford algorithm applied to a $3$-set feasibility problem in $\mathbb R^2$. The constraint sets are colored in blue, red and yellow. Each arrow represents a $2$-set Douglas--Rachford iteration.} \label{fig:2d}
 \end{center}
\end{figure*}

\begin{figure*}
 \begin{center}
  \includegraphics[width=0.5\textwidth]{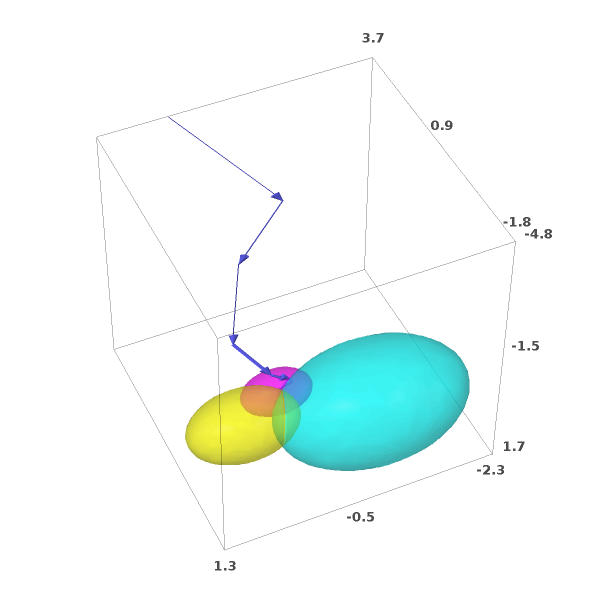}
  \caption{Cyclic Douglas--Rachford algorithm applied to a $3$-set feasibility problem in $\mathbb R^3$. The constraint sets are colored in blue, red and yellow. Each arrow represents a $2$-set Douglas--Rachford iteration.} \label{fig:3d}
 \end{center}
\end{figure*}

Results are tabulated in Tables~\ref{table:ball1}, \ref{table:ball2}, \ref{table:sphere1} \& \ref{table:sphere2}. A ``$0$" error (without decimal place) represents zero within the accuracy the \texttt{numpy.float64} data type. Illustrations of low dimensional examples are shown in Figures~\ref{fig:twospherescdy}, \ref{fig:2d} and \ref{fig:3d}.

We make some comments on the results.
\begin{itemize}
 \item The cyclic Douglas--Rachford method easily solves both problems.

 Solutions for $1000$ dimensional instances, with varying numbers of constraints, could be obtained in under half-a-second, with worst case errors in the order of $10^{-13}$. Many instances of the (P1) where solved without error. Instances involving fewer constraints required a greater number of iterations before termination. This can be explained by noting that each application of $T_{[1\,2\,\dots\,N]}$ applies a $2$-set Douglas--Rachford operator $N$ times, and hence iterations for instances with a greater number of constraints are more computationally expensive.

 \item When the number of constraints was small, relative to the dimension of the problem, the Douglas--Rachford method was able to solve (P1) in a comparable time to the cyclic Douglas--Rachford method.

     For larger numbers of constraints the method required significantly more time. This is a consequence of working in the product space, and would be ameliorated in a parallel implementation.

 \item Applied to (P2), the original Douglas--Rachford method encountered difficulties.

 While it was able to solve (P2) reliably when $\epsilon=10^{-3}$, when $\epsilon=10^{-6}$ the method failed to terminate in every instance. However, in these cases the final iterate still yielded a point having a satisfactory error. The number of iterations and time required, for the Douglas--Rachford method was significantly higher compared to the cyclic Douglas--Rachford method. As with (P1), the difference was most noticeable for problems with greater numbers of constraints.

 \item Both methods performed better on (P1) compared to (P2).

 This might well be predicted. For in (P1), all constraint sets are convex, hence convergence is guaranteed by Theorem~\ref{thm:main} and Theorem~\ref{thm:dr}, respectively. However, in (P2), the constraints are non-convex, thus neither Theorem cannot be evoked. Our results suggest that the cyclic Douglas--Rachford as a heuristic.

 \item We note that there are some difficulties in using the number of iterations as a comparison between two methods.

  Each cyclic Douglas--Rachford iteration requires the computation of $2N$ reflections, and each Douglas--Rachford iteration $(N+1)$. Even taking this into account, performance of the cyclic Douglas--Rachford method was superior to the original Douglas--Rachford method on both (P1) and (P2). However, in light of the ``no free lunch" theorems of Wolpert and Macready \cite{wolpert1997no}, we are heedful about asserting dominance of our method.
 \end{itemize}

\begin{table}
\caption{Results for $N$ ball constraints in $\mathbb R^n$ with $\epsilon=10^{-3}$. The mean (max) from $10$ trials are reported for the cyclic Douglas--Rachford (cycDR) and Douglas--Rachford (DR) methods.} \label{table:ball1}
\begin{center}
\scalebox{0.85}{\newcommand{\emptyline}{&&&&&&&\\}

\begin{tabular}{|ll|ll|ll|ll|}
\hline \emptyline
 \multirow{3}{*}{$n$} & \multirow{3}{*}{$N$} & \multicolumn{2}{l|}{Iterations} & \multicolumn{2}{l|}{Time (s)}    & \multicolumn{2}{l|}{Error} \\ [1em] 
 & & cycDR & DR & cycDR & DR & cycDR & DR \\ [1em] \hline
\emptyline

 100	& 10	&  4.6 (5)	&  22.9 (45)	&  0.004 (0.005)	&  0.022 (0.041)	&  0 (0) 	&  7.91e-34 (1.65e-33) \\
 100	& 20	&  3.4 (4)	&  42.4 (113)	&  0.006 (0.007)	&  0.071 (0.183)	&  0 (0) 	&  1.59e-33 (6.11e-33) \\
 100	& 50	&  2.3 (3)	&  75.3 (241)	&  0.008 (0.011)	&  0.288 (0.907)	&  2.03e-14 (2.02e-13) 	&  6.37e-08 (6.37e-07) \\
 100	& 100	&  2.1 (3)	&  97.9 (151)	&  0.014 (0.019)	&  0.717 (1.096)	&  0 (0) 	&  5.51e-33 (3.85e-32) \\
 100	& 200	&  2.0 (2)	&  186.2 (329)	&  0.025 (0.025)	&  2.655 (4.656)	&  9.68e-15 (9.68e-14) 	&  2.17e-08 (2.17e-07) \\
 100	& 500	&  2.0 (2)	&  284.2 (372)	&  0.059 (0.060)	&  9.968 (12.989)	&  0 (0) 	&  2.70e-07 (9.51e-07) \\
 100	& 1000	&  2.0 (2)	&  383.0 (507)	&  0.118 (0.119)	&  26.656 (35.120)	&  0 (0) 	&  4.30e-07 (9.42e-07) \\
 100	& 1100	&  2.0 (2)	&  380.7 (471)	&  0.129 (0.130)	&  29.160 (36.001)	&  0 (0) 	&  8.35e-07 (1.79e-06) \\
 100	& 1200	&  2.0 (2)	&  372.3 (537)	&  0.141 (0.144)	&  31.140 (44.886)	&  0 (0) 	&  8.08e-07 (1.79e-06) \\
 100	& 1500	&  2.0 (2)	&  466.0 (631)	&  0.178 (0.181)	&  49.282 (66.533)	&  0 (0) 	&  5.38e-05 (5.34e-04) \\
 100	& 2000	&  2.0 (2)	&  529.3 (725)	&  0.232 (0.234)	&  74.878 (102.148)	&  9.31e-19 (5.29e-18) 	&  4.79e-06 (4.00e-05) \\
\emptyline
 200	& 10	&  6.3 (7)	&  22.1 (35)	&  0.007 (0.008)	&  0.023 (0.036)	&  0 (0) 	&  1.89e-33 (6.18e-33) \\
 200	& 20	&  4.2 (5)	&  23.8 (56)	&  0.008 (0.010)	&  0.045 (0.103)	&  0 (0) 	&  6.61e-33 (2.55e-32) \\
 200	& 50	&  2.8 (3)	&  66.4 (144)	&  0.012 (0.013)	&  0.283 (0.604)	&  0 (0) 	&  1.48e-32 (7.12e-32) \\
 200	& 100	&  2.2 (3)	&  81.5 (132)	&  0.016 (0.021)	&  0.673 (1.083)	&  0 (0) 	&  3.20e-32 (1.03e-31) \\
 200	& 200	&  2.0 (2)	&  149.9 (301)	&  0.027 (0.028)	&  2.413 (4.801)	&  7.84e-16 (7.84e-15) 	&  5.97e-08 (5.97e-07) \\
 200	& 500	&  2.1 (3)	&  245.6 (354)	&  0.067 (0.095)	&  9.739 (14.055)	&  0 (0) 	&  2.20e-07 (8.42e-07) \\
 200	& 1000	&  2.0 (2)	&  323.4 (417)	&  0.124 (0.125)	&  26.429 (34.023)	&  0 (0) 	&  4.10e-07 (9.43e-07) \\
 200	& 1100	&  2.1 (3)	&  358.1 (434)	&  0.140 (0.201)	&  32.481 (39.289)	&  0 (0) 	&  4.06e-07 (8.92e-07) \\
 200	& 1200	&  2.0 (2)	&  337.0 (455)	&  0.145 (0.146)	&  33.662 (45.415)	&  0 (0) 	&  8.51e-07 (1.63e-06) \\
 200	& 1500	&  2.0 (2)	&  379.1 (495)	&  0.181 (0.183)	&  48.070 (62.778)	&  2.94e-19 (2.94e-18) 	&  6.70e-07 (1.36e-06) \\
 200	& 2000	&  2.0 (2)	&  422.6 (569)	&  0.239 (0.240)	&  74.611 (100.490)	&  0 (0) 	&  7.28e-05 (7.22e-04) \\
\emptyline
 500	& 10	&  9.1 (11)	&  17.0 (37)	&  0.012 (0.014)	&  0.023 (0.049)	&  0 (0) 	&  3.19e-33 (8.23e-33) \\
 500	& 20	&  6.1 (7)	&  16.9 (31)	&  0.014 (0.016)	&  0.042 (0.076)	&  0 (0) 	&  2.35e-32 (6.76e-32) \\
 500	& 50	&  3.0 (3)	&  66.3 (184)	&  0.016 (0.017)	&  0.373 (1.024)	&  0 (0) 	&  4.55e-32 (2.23e-31) \\
 500	& 100	&  2.6 (3)	&  81.5 (167)	&  0.023 (0.026)	&  0.892 (1.804)	&  0 (0) 	&  2.64e-31 (1.21e-30) \\
 500	& 200	&  2.3 (3)	&  142.5 (251)	&  0.037 (0.046)	&  3.068 (5.367)	&  0 (0) 	&  6.58e-32 (1.90e-31) \\
 500	& 500	&  2.0 (2)	&  267.3 (354)	&  0.071 (0.072)	&  15.687 (20.713)	&  0 (0) 	&  2.40e-07 (1.22e-06) \\
 500	& 1000	&  2.2 (3)	&  318.6 (413)	&  0.151 (0.204)	&  42.107 (54.312)	&  0 (0) 	&  4.33e-07 (9.15e-07) \\
 500	& 1100	&  2.0 (2)	&  338.4 (402)	&  0.149 (0.152)	&  49.911 (59.818)	&  0 (0) 	&  2.45e-07 (5.58e-07) \\
 500	& 1200	&  2.1 (3)	&  356.5 (478)	&  0.171 (0.240)	&  57.385 (76.217)	&  0 (0) 	&  3.60e-07 (9.01e-07) \\
 500	& 1500	&  2.0 (2)	&  345.7 (407)	&  0.203 (0.205)	&  70.272 (82.803)	&  0 (0) 	&  6.39e-07 (9.77e-07) \\
 500	& 2000	&  2.0 (2)	&  358.3 (404)	&  0.271 (0.273)	&  97.104 (110.421)	&  0 (0) 	&  5.34e-07 (1.12e-06) \\
\emptyline
 1000	& 10	&  15.0 (16)	&  12.4 (26)	&  0.024 (0.026)	&  0.023 (0.048)	&  2.12e-19 (2.12e-18) 	&  1.24e-32 (3.34e-32) \\
 1000	& 20	&  8.2 (9)	&  20.4 (71)	&  0.024 (0.027)	&  0.069 (0.237)	&  0 (0) 	&  3.02e-32 (6.98e-32) \\
 1000	& 50	&  4.3 (5)	&  38.8 (112)	&  0.028 (0.031)	&  0.311 (0.884)	&  2.67e-19 (2.67e-18) 	&  1.24e-31 (5.29e-31) \\
 1000	& 100	&  3.3 (4)	&  80.8 (222)	&  0.037 (0.042)	&  1.260 (3.436)	&  0 (0) 	&  2.15e-31 (6.84e-31) \\
 1000	& 200	&  2.4 (3)	&  138.5 (270)	&  0.048 (0.058)	&  4.730 (9.446)	&  0 (0) 	&  6.50e-31 (2.52e-30) \\
 1000	& 500	&  2.0 (2)	&  201.3 (313)	&  0.085 (0.086)	&  20.356 (31.166)	&  3.90e-20 (3.90e-19) 	&  2.10e-30 (6.11e-30) \\
 1000	& 1000	&  2.0 (2)	&  348.8 (518)	&  0.162 (0.164)	&  73.420 (108.493)	&  0 (0) 	&  1.36e-06 (1.20e-05) \\
 1000	& 1100	&  2.1 (3)	&  334.4 (550)	&  0.183 (0.260)	&  77.174 (126.896)	&  0 (0) 	&  1.10e-07 (7.62e-07) \\
 1000	& 1200	&  2.0 (2)	&  353.8 (518)	&  0.190 (0.193)	&  89.153 (128.683)	&  0 (0) 	&  1.74e-07 (9.63e-07) \\
 1000	& 1500	&  2.1 (3)	&  403.9 (607)	&  0.245 (0.346)	&  126.707 (189.011)	&  1.33e-19 (1.33e-18) 	&  3.17e-07 (8.94e-07) \\
 1000	& 2000	&  2.0 (2)	&  487.0 (593)	&  0.307 (0.312)	&  239.210 (374.390)	&  0 (0) 	&  3.58e-07 (1.11e-06) \\ 
\emptyline\hline
\end{tabular}

}
\end{center}
\end{table}
\begin{table}
\caption{Results for $N$ ball constraints in $\mathbb R^n$ with $\epsilon=10^{-6}$. The mean (max) from $10$ trials are reported for the cyclic Douglas--Rachford (cycDR) and Douglas--Rachford (DR) methods.} \label{table:ball2}
\begin{center}
\scalebox{0.85}{\newcommand{\emptyline}{&&&&&&&\\}

\begin{tabular}{|ll|ll|ll|ll|}
\hline \emptyline
 \multirow{3}{*}{$n$} & \multirow{3}{*}{$N$} & \multicolumn{2}{l|}{Iterations} & \multicolumn{2}{l|}{Time (s)}    & \multicolumn{2}{l|}{Error} \\ [1em] 
 & & cycDR & DR & cycDR & DR & cycDR & DR \\ [1em] \hline
\emptyline
 100	& 10	&  4.7 (6)	&  22.9 (45)	&  0.005 (0.005)	&  0.023 (0.044)	&  0 (0) 	&  7.91e-34 (1.65e-33) \\
 100	& 20	&  3.6 (5)	&  42.4 (113)	&  0.006 (0.008)	&  0.077 (0.199)	&  0 (0) 	&  1.59e-33 (6.11e-33) \\
 100	& 50	&  2.6 (4)	&  77.4 (262)	&  0.010 (0.014)	&  0.320 (1.068)	&  0 (0) 	&  1.24e-32 (5.96e-32) \\
 100	& 100	&  2.1 (3)	&  97.9 (151)	&  0.015 (0.020)	&  0.781 (1.195)	&  0 (0) 	&  5.51e-33 (3.85e-32) \\
 100	& 200	&  2.3 (3)	&  187.1 (329)	&  0.029 (0.038)	&  2.909 (5.077)	&  0 (0) 	&  5.89e-33 (2.30e-32) \\
 100	& 500	&  2.3 (3)	&  329.6 (661)	&  0.071 (0.093)	&  12.554 (24.975)	&  0 (0) 	&  1.81e-32 (6.37e-32) \\
 100	& 1000	&  2.3 (3)	&  427.4 (635)	&  0.141 (0.184)	&  32.431 (47.903)	&  0 (0) 	&  2.21e-32 (8.10e-32) \\
 100	& 1100	&  2.3 (3)	&  467.4 (714)	&  0.153 (0.199)	&  38.936 (59.259)	&  0 (0) 	&  3.92e-32 (3.17e-31) \\
 100	& 1200	&  2.1 (3)	&  451.8 (698)	&  0.154 (0.218)	&  41.059 (63.259)	&  0 (0) 	&  1.12e-31 (8.08e-31) \\
 100	& 1500	&  2.1 (3)	&  507.2 (712)	&  0.193 (0.277)	&  58.578 (81.907)	&  0 (0) 	&  2.66e-31 (8.15e-31) \\
 100	& 2000	&  2.3 (3)	&  627.8 (808)	&  0.276 (0.361)	&  96.554 (124.880)	&  0 (0) 	&  1.50e-31 (7.53e-31) \\
\emptyline
 200	& 10	&  6.3 (7)	&  22.1 (35)	&  0.007 (0.008)	&  0.026 (0.040)	&  0 (0) 	&  1.89e-33 (6.18e-33) \\
 200	& 20	&  4.4 (5)	&  23.8 (56)	&  0.009 (0.010)	&  0.050 (0.116)	&  0 (0) 	&  6.61e-33 (2.55e-32) \\
 200	& 50	&  2.8 (3)	&  66.4 (144)	&  0.012 (0.014)	&  0.323 (0.691)	&  0 (0) 	&  1.48e-32 (7.12e-32) \\
 200	& 100	&  2.4 (3)	&  81.5 (132)	&  0.018 (0.022)	&  0.772 (1.242)	&  0 (0) 	&  3.20e-32 (1.03e-31) \\
 200	& 200	&  2.1 (3)	&  152.5 (301)	&  0.030 (0.040)	&  2.825 (5.547)	&  0 (0) 	&  3.04e-32 (1.63e-31) \\
 200	& 500	&  2.5 (3)	&  263.8 (435)	&  0.081 (0.098)	&  12.074 (19.831)	&  0 (0) 	&  4.32e-32 (2.69e-31) \\
 200	& 1000	&  2.1 (3)	&  427.9 (703)	&  0.135 (0.192)	&  40.025 (65.394)	&  0 (0) 	&  6.64e-32 (2.66e-31) \\
 200	& 1100	&  2.2 (3)	&  426.0 (545)	&  0.153 (0.209)	&  44.161 (56.724)	&  0 (0) 	&  5.92e-32 (1.86e-31) \\
 200	& 1200	&  2.2 (3)	&  442.9 (633)	&  0.166 (0.225)	&  50.678 (72.862)	&  0 (0) 	&  5.98e-32 (2.81e-31) \\
 200	& 1500	&  2.1 (3)	&  470.1 (882)	&  0.196 (0.279)	&  69.261 (128.978)	&  1.00e-25 (1.00e-24) 	&  1.71e-31 (6.88e-31) \\
 200	& 2000	&  2.0 (2)	&  578.4 (894)	&  0.248 (0.252)	&  117.575 (179.883)	&  0 (0) 	&  4.82e-32 (1.04e-31) \\
\emptyline
 500	& 10	&  9.1 (11)	&  17.0 (37)	&  0.012 (0.015)	&  0.028 (0.060)	&  0 (0) 	&  3.19e-33 (8.23e-33) \\
 500	& 20	&  6.1 (7)	&  16.9 (31)	&  0.015 (0.017)	&  0.052 (0.093)	&  0 (0) 	&  2.35e-32 (6.76e-32) \\
 500	& 50	&  3.1 (4)	&  66.3 (184)	&  0.017 (0.019)	&  0.467 (1.285)	&  0 (0) 	&  4.55e-32 (2.23e-31) \\
 500	& 100	&  2.6 (3)	&  81.5 (167)	&  0.024 (0.027)	&  1.132 (2.287)	&  0 (0) 	&  2.64e-31 (1.21e-30) \\
 500	& 200	&  2.7 (4)	&  142.5 (251)	&  0.043 (0.060)	&  3.979 (6.824)	&  0 (0) 	&  6.58e-32 (1.90e-31) \\
 500	& 500	&  2.1 (3)	&  277.5 (399)	&  0.078 (0.108)	&  20.528 (29.207)	&  0 (0) 	&  4.06e-31 (2.22e-30) \\
 500	& 1000	&  2.3 (3)	&  358.3 (540)	&  0.162 (0.210)	&  59.290 (88.063)	&  0 (0) 	&  8.30e-32 (3.91e-31) \\
 500	& 1100	&  2.1 (3)	&  372.7 (458)	&  0.163 (0.231)	&  67.065 (83.951)	&  0 (0) 	&  6.41e-32 (3.21e-31) \\
 500	& 1200	&  2.2 (3)	&  416.4 (604)	&  0.184 (0.246)	&  82.461 (119.456)	&  0 (0) 	&  4.81e-32 (2.22e-31) \\
 500	& 1500	&  2.1 (3)	&  461.7 (691)	&  0.220 (0.313)	&  114.836 (175.009)	&  0 (0) 	&  2.28e-31 (1.36e-30) \\
 500	& 2000	&  2.0 (2)	&  483.9 (785)	&  0.278 (0.283)	&  159.287 (259.033)	&  0 (0) 	&  6.06e-31 (2.92e-30) \\
\emptyline
 1000	& 10	&  15.1 (17)	&  12.4 (26)	&  0.024 (0.027)	&  0.030 (0.063)	&  0 (0) 	&  1.24e-32 (3.34e-32) \\
 1000	& 20	&  8.2 (9)	&  20.4 (71)	&  0.025 (0.027)	&  0.095 (0.330)	&  0 (0) 	&  3.02e-32 (6.98e-32) \\
 1000	& 50	&  4.5 (6)	&  38.8 (112)	&  0.029 (0.035)	&  0.434 (1.249)	&  0 (0) 	&  1.24e-31 (5.29e-31) \\
 1000	& 100	&  3.3 (4)	&  80.8 (222)	&  0.038 (0.043)	&  1.761 (4.730)	&  0 (0) 	&  2.15e-31 (6.84e-31) \\
 1000	& 200	&  2.5 (3)	&  138.5 (270)	&  0.051 (0.059)	&  6.224 (12.089)	&  0 (0) 	&  6.50e-31 (2.52e-30) \\
 1000	& 500	&  2.3 (3)	&  201.3 (313)	&  0.099 (0.125)	&  26.108 (40.534)	&  0 (0) 	&  2.10e-30 (6.11e-30) \\
 1000	& 1000	&  2.1 (3)	&  388.7 (905)	&  0.174 (0.241)	&  103.839 (243.085)	&  0 (0) 	&  2.17e-30 (1.79e-29) \\
 1000	& 1100	&  2.3 (3)	&  354.4 (660)	&  0.205 (0.264)	&  120.706 (220.612)	&  0 (0) 	&  2.26e-30 (9.82e-30) \\
 1000	& 1200	&  2.3 (3)	&  376.3 (620)	&  0.223 (0.288)	&  161.133 (260.857)	&  0 (0) 	&  1.61e-30 (1.26e-29) \\
 1000	& 1500	&  2.2 (3)	&  526.0 (1000)	&  0.265 (0.358)	&  276.095 (541.502)	&  2.68e-22 (2.68e-21) 	&  1.08e-09 (5.98e-09) \\
 1000	& 2000	&  2.1 (3)	&  595.0 (894)	&  0.332 (0.469)	&  427.933 (646.182)	&  0 (0) 	&  4.48e-31 (1.97e-30) \\ 
\emptyline\hline
\end{tabular}

}
\end{center}
\end{table}

\begin{table}
\caption{Results for $N$ sphere constraints in $\mathbb R^n$ with $\epsilon=10^{-3}$. The mean (max) from $10$ trials are reported for the cyclic Douglas--Rachford (cycDR) and Douglas--Rachford (DR) methods.}\label{table:sphere1}
\begin{center}
\scalebox{0.85}{\newcommand{\emptyline}{&&&&&&&\\}

\begin{tabular}{|ll|ll|ll|ll|}
\hline \emptyline
 \multirow{3}{*}{$n$} & \multirow{3}{*}{$N$} & \multicolumn{2}{l|}{Iterations} & \multicolumn{2}{l|}{Time (s)}    & \multicolumn{2}{l|}{Error} \\ [1em] 
 & & cycDR & DR & cycDR & DR & cycDR & DR \\ [1em] \hline
\emptyline

 100	& 10	&  16.8 (17)	&  219.1 (327)	&  0.021 (0.021)	&  0.272 (0.421)	&  4.46e-13 (7.24e-13) 	&  8.29e-06 (1.06e-05) \\
 100	& 20	&  9.0 (9)	&  247.8 (314)	&  0.022 (0.022)	&  0.669 (0.873)	&  5.94e-14 (1.12e-13) 	&  1.54e-05 (1.70e-05) \\
 100	& 50	&  5.0 (5)	&  375.1 (481)	&  0.031 (0.031)	&  2.559 (3.307)	&  6.59e-18 (1.00e-17) 	&  2.86e-05 (3.29e-05) \\
 100	& 100	&  3.0 (3)	&  471.6 (806)	&  0.037 (0.037)	&  6.185 (10.904)	&  1.30e-20 (2.62e-20) 	&  4.30e-05 (4.98e-05) \\
 100	& 200	&  2.0 (2)	&  747.7 (1000)	&  0.050 (0.050)	&  19.932 (26.634)	&  3.60e-26 (4.50e-26) 	&  5.66e-05 (6.12e-05) \\
 100	& 500	&  2.0 (2)	&  1000.0 (1000)	&  0.127 (0.128)	&  64.046 (65.562)	&  2.56e-26 (5.32e-26) 	&  1.18e-04 (1.40e-04) \\
 100	& 1000	&  2.0 (2)	&  1000.0 (1000)	&  0.253 (0.255)	&  130.475 (138.540)	&  3.87e-26 (8.28e-26) 	&  2.43e-04 (2.70e-04) \\
 100	& 1100	&  2.0 (2)	&  1000.0 (1000)	&  0.278 (0.281)	&  143.022 (149.895)	&  5.28e-26 (8.95e-26) 	&  2.53e-04 (2.95e-04) \\
 100	& 1200	&  2.0 (2)	&  1000.0 (1000)	&  0.304 (0.306)	&  156.653 (158.918)	&  7.16e-26 (1.65e-25) 	&  3.12e-04 (3.74e-04) \\
 100	& 1500	&  2.0 (2)	&  1000.0 (1000)	&  0.380 (0.386)	&  197.801 (210.661)	&  1.02e-25 (2.27e-25) 	&  3.50e-04 (3.84e-04) \\
 100	& 2000	&  2.0 (2)	&  1000.0 (1000)	&  0.504 (0.511)	&  261.535 (267.483)	&  9.91e-26 (2.42e-25) 	&  4.82e-04 (6.04e-04) \\
\emptyline
 200	& 10	&  23.0 (23)	&  123.1 (222)	&  0.030 (0.030)	&  0.183 (0.334)	&  2.50e-13 (7.46e-13) 	&  6.33e-06 (8.72e-06) \\
 200	& 20	&  12.8 (13)	&  115.2 (171)	&  0.033 (0.034)	&  0.329 (0.507)	&  1.48e-14 (4.39e-14) 	&  1.05e-05 (1.46e-05) \\
 200	& 50	&  6.0 (6)	&  110.6 (124)	&  0.038 (0.038)	&  0.790 (0.874)	&  2.56e-16 (4.47e-16) 	&  1.42e-05 (2.09e-05) \\
 200	& 100	&  4.0 (4)	&  120.1 (128)	&  0.051 (0.052)	&  1.726 (1.825)	&  2.49e-20 (3.71e-20) 	&  1.70e-05 (2.21e-05) \\
 200	& 200	&  3.0 (3)	&  134.9 (139)	&  0.077 (0.078)	&  3.749 (4.088)	&  2.88e-26 (6.69e-26) 	&  2.31e-05 (2.98e-05) \\
 200	& 500	&  2.0 (2)	&  156.4 (161)	&  0.130 (0.131)	&  11.106 (11.715)	&  8.53e-26 (1.71e-25) 	&  4.37e-05 (5.16e-05) \\
 200	& 1000	&  2.0 (2)	&  175.6 (182)	&  0.262 (0.264)	&  26.888 (30.935)	&  1.53e-25 (3.33e-25) 	&  7.27e-05 (8.71e-05) \\
 200	& 1100	&  2.0 (2)	&  179.5 (191)	&  0.286 (0.290)	&  31.161 (33.273)	&  1.71e-25 (2.77e-25) 	&  7.97e-05 (9.82e-05) \\
 200	& 1200	&  2.0 (2)	&  179.0 (184)	&  0.309 (0.316)	&  31.547 (35.242)	&  2.02e-25 (4.76e-25) 	&  7.86e-05 (8.59e-05) \\
 200	& 1500	&  2.0 (2)	&  190.0 (200)	&  0.394 (0.400)	&  43.207 (47.057)	&  2.29e-25 (3.91e-25) 	&  9.97e-05 (1.15e-04) \\
 200	& 2000	&  2.0 (2)	&  230.3 (295)	&  0.522 (0.525)	&  72.760 (94.718)	&  3.96e-25 (7.53e-25) 	&  1.34e-04 (1.58e-04) \\
\emptyline
 500	& 10	&  35.3 (36)	&  51.6 (67)	&  0.051 (0.052)	&  0.093 (0.121)	&  4.81e-14 (1.13e-13) 	&  1.46e-06 (2.86e-06) \\
 500	& 20	&  19.1 (20)	&  72.3 (85)	&  0.055 (0.057)	&  0.254 (0.300)	&  8.32e-15 (1.21e-14) 	&  2.02e-06 (3.29e-06) \\
 500	& 50	&  9.0 (9)	&  96.8 (107)	&  0.064 (0.064)	&  0.888 (0.991)	&  1.82e-16 (2.72e-16) 	&  2.03e-06 (2.36e-06) \\
 500	& 100	&  5.0 (5)	&  120.5 (127)	&  0.070 (0.071)	&  2.271 (2.475)	&  1.21e-17 (1.75e-17) 	&  2.39e-06 (2.98e-06) \\
 500	& 200	&  3.0 (3)	&  143.0 (148)	&  0.085 (0.085)	&  5.579 (6.072)	&  4.29e-20 (5.80e-20) 	&  2.84e-06 (3.79e-06) \\
 500	& 500	&  2.0 (2)	&  171.3 (176)	&  0.145 (0.146)	&  17.719 (21.106)	&  3.30e-25 (8.09e-25) 	&  4.14e-06 (4.50e-06) \\
 500	& 1000	&  2.0 (2)	&  195.1 (197)	&  0.295 (0.296)	&  47.771 (51.291)	&  8.61e-25 (1.37e-24) 	&  6.18e-06 (6.64e-06) \\
 500	& 1100	&  2.0 (2)	&  198.1 (202)	&  0.327 (0.329)	&  50.934 (54.122)	&  1.02e-24 (2.28e-24) 	&  6.93e-06 (8.30e-06) \\
 500	& 1200	&  2.0 (2)	&  199.8 (204)	&  0.359 (0.362)	&  56.155 (60.472)	&  1.01e-24 (2.17e-24) 	&  6.69e-06 (7.56e-06) \\
 500	& 1500	&  2.0 (2)	&  208.5 (213)	&  0.445 (0.451)	&  73.848 (78.355)	&  1.34e-24 (2.66e-24) 	&  7.96e-06 (8.62e-06) \\
 500	& 2000	&  2.0 (2)	&  217.8 (221)	&  0.590 (0.598)	&  100.538 (111.140)	&  1.61e-24 (3.00e-24) 	&  1.00e-05 (1.09e-05) \\
\emptyline
 1000	& 10	&  49.2 (50)	&  9.1 (29)	&  0.083 (0.085)	&  0.023 (0.072)	&  1.32e-14 (2.44e-14) 	&  3.15e-07 (7.11e-07) \\
 1000	& 20	&  27.0 (27)	&  30.0 (66)	&  0.092 (0.092)	&  0.127 (0.276)	&  1.96e-15 (3.11e-15) 	&  4.88e-07 (7.90e-07) \\
 1000	& 50	&  12.0 (12)	&  73.1 (86)	&  0.100 (0.100)	&  0.779 (0.946)	&  1.85e-16 (2.37e-16) 	&  4.98e-07 (6.57e-07) \\
 1000	& 100	&  7.0 (7)	&  103.7 (113)	&  0.117 (0.117)	&  2.248 (2.513)	&  4.22e-18 (5.49e-18) 	&  5.51e-07 (7.17e-07) \\
 1000	& 200	&  4.0 (4)	&  136.8 (143)	&  0.133 (0.134)	&  8.869 (10.028)	&  8.89e-20 (1.1e-19) 	&  6.28e-07 (7.86e-07) \\
 1000	& 500	&  3.0 (3)	&  178.9 (182)	&  0.258 (0.260)	&  31.706 (34.394)	&  2.17e-24 (5.88e-24) 	&  7.86e-07 (9.48e-07) \\
 1000	& 1000	&  2.0 (2)	&  211.7 (215)	&  0.343 (0.344)	&  73.182 (78.028)	&  2.16e-24 (3.71e-24) 	&  1.04e-06 (1.15e-06) \\
 1000	& 1100	&  2.0 (2)	&  215.3 (221)	&  0.379 (0.383)	&  84.584 (92.095)	&  4.01e-24 (9.45e-24) 	&  1.07e-06 (1.21e-06) \\
 1000	& 1200	&  2.0 (2)	&  218.7 (220)	&  0.411 (0.414)	&  94.408 (99.951)	&  3.91e-24 (8.19e-24) 	&  1.14e-06 (1.27e-06) \\
 1000	& 1500	&  2.0 (2)	&  228.6 (232)	&  0.518 (0.524)	&  124.265 (132.683)	&  5.73e-24 (1.58e-23) 	&  1.29e-06 (1.48e-06) \\
 1000	& 2000	&  2.0 (2)	&  242.3 (245)	&  0.681 (0.684)	&  176.575 (191.354)	&  6.06e-24 (1.5e-23) 	&  1.53e-06 (1.67e-06) \\ 
\emptyline\hline
\end{tabular}

}
\end{center}
\end{table}
\begin{table}
\caption{Results for $N$ sphere constraints in $\mathbb R^n$ with $\epsilon=10^{-6}$. The mean (max) from $10$ trials are reported for the cyclic Douglas--Rachford (cycDR) and Douglas--Rachford (DR) methods.}\label{table:sphere2}
\begin{center}
\scalebox{0.85}{\newcommand{\emptyline}{&&&&&&&\\}

\begin{tabular}{|ll|ll|ll|ll|}
\hline \emptyline
\multirow{3}{*}{$n$} & \multirow{3}{*}{$N$} & \multicolumn{2}{l|}{Iterations} & \multicolumn{2}{l|}{Time (s)}    & \multicolumn{2}{l|}{Error} \\ [1em] 
 & & cycDR & DR & cycDR & DR & cycDR & DR \\ [1em] \hline
\emptyline

 100	& 10	&  27.4 (28)	&  1000.0 (1000)	&  0.035 (0.036)	&  1.302 (1.419)	&  1.21e-18 (2.25e-18) 	&  9.10e-08 (2.16e-07) \\
 100	& 20	&  14.1 (15)	&  1000.0 (1000)	&  0.036 (0.038)	&  2.463 (2.750)	&  1.21e-19 (2.65e-19) 	&  1.26e-06 (1.78e-06) \\
 100	& 50	&  7.0 (7)	&  1000.0 (1000)	&  0.044 (0.045)	&  6.760 (7.052)	&  1.02e-23 (1.81e-23) 	&  8.51e-06 (1.07e-05) \\
 100	& 100	&  4.0 (4)	&  1000.0 (1000)	&  0.052 (0.052)	&  13.823 (14.145)	&  2.02e-26 (3.73e-26) 	&  2.17e-05 (3.00e-05) \\
 100	& 200	&  3.0 (3)	&  1000.0 (1000)	&  0.078 (0.078)	&  25.239 (27.594)	&  8.97e-27 (1.69e-26) 	&  4.39e-05 (5.93e-05) \\
 100	& 500	&  2.0 (2)	&  1000.0 (1000)	&  0.131 (0.132)	&  66.159 (68.491)	&  2.56e-26 (5.32e-26) 	&  1.18e-04 (1.40e-04) \\
 100	& 1000	&  2.0 (2)	&  1000.0 (1000)	&  0.262 (0.263)	&  131.165 (139.166)	&  3.87e-26 (8.28e-26) 	&  2.43e-04 (2.70e-04) \\
 100	& 1100	&  2.0 (2)	&  1000.0 (1000)	&  0.290 (0.293)	&  149.386 (154.285)	&  5.28e-26 (8.95e-26) 	&  2.53e-04 (2.95e-04) \\
 100	& 1200	&  2.0 (2)	&  1000.0 (1000)	&  0.317 (0.322)	&  162.476 (171.252)	&  7.16e-26 (1.65e-25) 	&  3.12e-04 (3.74e-04) \\
 100	& 1500	&  2.0 (2)	&  1000.0 (1000)	&  0.395 (0.399)	&  205.210 (214.347)	&  1.02e-25 (2.27e-25) 	&  3.50e-04 (3.84e-04) \\
 100	& 2000	&  2.0 (2)	&  1000.0 (1000)	&  0.524 (0.527)	&  284.740 (295.621)	&  9.91e-26 (2.42e-25) 	&  4.82e-04 (6.04e-04) \\
\emptyline
 200	& 10	&  37.8 (39)	&  1000.0 (1000)	&  0.051 (0.053)	&  1.787 (1.801)	&  5.36e-19 (9.86e-19) 	&  9.14e-08 (1.73e-07) \\
 200	& 20	&  20.0 (20)	&  1000.0 (1000)	&  0.053 (0.054)	&  3.422 (3.452)	&  2.01e-20 (3.49e-20) 	&  9.56e-07 (1.46e-06) \\
 200	& 50	&  9.0 (9)	&  1000.0 (1000)	&  0.059 (0.060)	&  8.384 (8.615)	&  1.53e-22 (3.08e-22) 	&  4.52e-06 (6.27e-06) \\
 200	& 100	&  5.0 (5)	&  1000.0 (1000)	&  0.067 (0.067)	&  15.429 (17.471)	&  1.61e-24 (2.45e-24) 	&  8.05e-06 (1.09e-05) \\
 200	& 200	&  3.0 (3)	&  1000.0 (1000)	&  0.080 (0.080)	&  31.967 (33.857)	&  2.88e-26 (6.69e-26) 	&  1.39e-05 (1.8e-05) \\
 200	& 500	&  2.0 (2)	&  1000.0 (1000)	&  0.135 (0.135)	&  81.272 (85.423)	&  8.53e-26 (1.71e-25) 	&  3.07e-05 (3.64e-05) \\
 200	& 1000	&  2.0 (2)	&  1000.0 (1000)	&  0.272 (0.273)	&  166.615 (177.342)	&  1.53e-25 (3.33e-25) 	&  5.49e-05 (6.55e-05) \\
 200	& 1100	&  2.0 (2)	&  1000.0 (1000)	&  0.297 (0.299)	&  168.501 (184.769)	&  1.71e-25 (2.77e-25) 	&  6.05e-05 (7.36e-05) \\
 200	& 1200	&  2.0 (2)	&  1000.0 (1000)	&  0.320 (0.323)	&  195.997 (204.751)	&  2.02e-25 (4.76e-25) 	&  6.03e-05 (6.58e-05) \\
 200	& 1500	&  2.0 (2)	&  1000.0 (1000)	&  0.411 (0.416)	&  250.555 (257.482)	&  2.29e-25 (3.91e-25) 	&  7.77e-05 (9.00e-05) \\
 200	& 2000	&  2.0 (2)	&  1000.0 (1000)	&  0.540 (0.543)	&  333.273 (340.514)	&  3.96e-25 (7.53e-25) 	&  1.06e-04 (1.29e-04) \\
\emptyline
 500	& 10	&  58.0 (59)	&  1000.0 (1000)	&  0.085 (0.087)	&  2.135 (2.220)	&  1.46e-19 (3.30e-19) 	&  7.50e-08 (1.05e-07) \\
 500	& 20	&  30.8 (31)	&  1000.0 (1000)	&  0.091 (0.091)	&  3.658 (3.691)	&  1.04e-20 (2.56e-20) 	&  4.45e-07 (6.81e-07) \\
 500	& 50	&  13.1 (14)	&  1000.0 (1000)	&  0.095 (0.102)	&  9.321 (10.090)	&  8.52e-22 (1.38e-21) 	&  1.05e-06 (1.21e-06) \\
 500	& 100	&  7.8 (8)	&  1000.0 (1000)	&  0.114 (0.117)	&  18.124 (19.334)	&  8.23e-24 (4.40e-23) 	&  1.65e-06 (2.04e-06) \\
 500	& 200	&  5.0 (5)	&  1000.0 (1000)	&  0.147 (0.147)	&  41.555 (45.159)	&  1.60e-25 (2.81e-25) 	&  2.25e-06 (2.95e-06) \\
 500	& 500	&  3.0 (3)	&  1000.0 (1000)	&  0.224 (0.225)	&  118.550 (125.955)	&  3.31e-25 (8.15e-25) 	&  3.60e-06 (3.91e-06) \\
 500	& 1000	&  2.0 (2)	&  1000.0 (1000)	&  0.305 (0.306)	&  256.931 (276.971)	&  8.61e-25 (1.37e-24) 	&  5.57e-06 (5.97e-06) \\
 500	& 1100	&  2.0 (2)	&  1000.0 (1000)	&  0.336 (0.338)	&  279.305 (295.475)	&  1.02e-24 (2.28e-24) 	&  6.26e-06 (7.46e-06) \\
 500	& 1200	&  2.0 (2)	&  1000.0 (1000)	&  0.369 (0.371)	&  299.386 (318.799)	&  1.01e-24 (2.17e-24) 	&  6.06e-06 (6.85e-06) \\
 500	& 1500	&  2.0 (2)	&  1000.0 (1000)	&  0.459 (0.465)	&  379.780 (394.991)	&  1.34e-24 (2.66e-24) 	&  7.28e-06 (7.89e-06) \\
 500	& 2000	&  2.0 (2)	&  1000.0 (1000)	&  0.610 (0.618)	&  513.325 (526.365)	&  1.61e-24 (3.00e-24) 	&  9.24e-06 (1.01e-05) \\
\emptyline
 1000	& 10	&  81.1 (82)	&  1000.0 (1000)	&  0.140 (0.141)	&  3.181 (3.250)	&  4.17e-20 (8.76e-20) 	&  3.62e-08 (9.00e-08) \\
 1000	& 20	&  42.9 (43)	&  1000.0 (1000)	&  0.148 (0.149)	&  6.256 (6.973)	&  3.33e-21 (5.35e-21) 	&  1.65e-07 (2.59e-07) \\
 1000	& 50	&  18.8 (19)	&  1000.0 (1000)	&  0.161 (0.164)	&  15.651 (17.205)	&  1.26e-22 (4.37e-22) 	&  3.17e-07 (4.18e-07) \\
 1000	& 100	&  10.0 (10)	&  1000.0 (1000)	&  0.172 (0.172)	&  32.247 (36.360)	&  9.71e-24 (1.23e-23) 	&  4.33e-07 (5.66e-07) \\
 1000	& 200	&  6.0 (6)	&  1000.0 (1000)	&  0.207 (0.208)	&  71.902 (79.069)	&  6.31e-25 (1.43e-24) 	&  5.46e-07 (6.82e-07) \\
 1000	& 500	&  3.0 (3)	&  1000.0 (1000)	&  0.261 (0.263)	&  199.425 (211.841)	&  2.17e-24 (5.88e-24) 	&  7.24e-07 (8.72e-07) \\
 1000	& 1000	&  2.0 (2)	&  1000.0 (1000)	&  0.352 (0.354)	&  366.672 (403.696)	&  2.16e-24 (3.71e-24) 	&  9.80e-07 (1.08e-06) \\
 1000	& 1100	&  2.0 (2)	&  1000.0 (1000)	&  0.391 (0.393)	&  388.322 (396.817)	&  4.01e-24 (9.45e-24) 	&  1.01e-06 (1.14e-06) \\
 1000	& 1200	&  2.0 (2)	&  1000.0 (1000)	&  0.426 (0.427)	&  426.523 (436.721)	&  3.91e-24 (8.19e-24) 	&  1.08e-06 (1.20e-06) \\
 1000	& 1500	&  2.0 (2)	&  1000.0 (1000)	&  0.526 (0.535)	&  533.574 (546.055)	&  5.73e-24 (1.58e-23) 	&  1.22e-06 (1.41e-06) \\
 1000	& 2000	&  2.0 (2)	&  1000.0 (1000)	&  0.697 (0.700)	&  725.869 (733.381)	&  6.06e-24 (1.50e-23) 	&  1.46e-06 (1.59e-06) \\ 
\emptyline\hline
\end{tabular}

}
\end{center}
\end{table}

\clearpage

\section{Conclusion}
Two new projection algorithms, the cyclic Douglas--Rachford and averaged Douglas--Rachford iteration schemes, were introduced and studied. Applied to $N$-set convex feasibility problems in Hilbert space, both weakly converge to point whose projections onto each of the $N$-set coincide. While the cyclic Douglas--Rachford is sequential, each iteration of the averaged Douglas--Rachford can be parallelized.

 Numerical experiments suggest that that the cyclic Douglas--Rachford scheme outperforms the classical Douglas--Rachford scheme, which suffers as a result of the product formulation. An advantage of our schemes is that they can be used in the original space, without recourse to this formulation. For inconsistent $2$-set problems, there is evidence to suggest that the two set cyclic Douglas--Rachford scheme yields best approximation pairs.

HTML versions of the interactive \emph{Cinderella} applets are available at:
\begin{enumerate}
 \item \url{http://carma.newcastle.edu.au/tam/cycdr/2lines.html}
 \item \url{http://carma.newcastle.edu.au/tam/cycdr/circleline.html}
 \item \url{http://carma.newcastle.edu.au/tam/cycdr/2circles.html}
 \item \url{http://carma.newcastle.edu.au/tam/cycdr/circlepoint.html}
\end{enumerate}

\section*{Acknowledgements}
 The authors wish to acknowledge the very helpful comments and suggestions of two anonymous referees.

 Jonathan M. Borwein's research is supported in part by the Australian Research Council.
 Matthew K. Tam's research is supported  in part by an Australian Postgraduate Award.

\bibliographystyle{plain}
\bibliography{cycdr}

\end{document}